\newtheorem{theorem}{Theorem}
\newtheorem{lemma}{Lemma}
\newtheorem{corollary}{Corollary}
\newtheorem{proposition}{Proposition}
\newtheorem*{theorem*}{Theorem}
\newtheorem*{example*}{Example} 
\newtheorem*{definition*}{Definition}
\newtheorem*{lemma*}{Lemma}
\newtheorem*{remark*}{Remark}
\newtheorem*{corollary*}{Corollary}
\newtheorem*{proposition*}{Proposition}
\newtheorem*{assumption*}{Assumption}
\newtheorem*{claim*}{Claim}
\newtheoremstyle{TheoremNum}
        {\topsep}{\topsep}              
        {\itshape}                      
        {}                              
        {\bfseries}                     
        {.}                             
        { }                             
        {\thmname{#1}\thmnote{ \bfseries #3}}
\theoremstyle{TheoremNum}
\newtheoremstyle{LemmaNum}
        {\topsep}{\topsep}              
        {\itshape}                      
        {}                              
        {\bfseries}                     
        {.}                             
        { }                             
        {\thmname{#1}\thmnote{ \bfseries #3}}
\theoremstyle{LemmaNum}
\renewcommand{\Pr}{ \mathbb{P} }
\newcommand{\x}{ \mathbf{x} }
\renewcommand{\v}{ \mathbf{v} } 
\renewcommand{\u}{ \mathbf{u} } 
\renewcommand{\vec}{ \mathrm{\texttt{vec}} }
\newcommand{\e}{ \mathbf{e} }
\newcommand{\R}{\mathbb{R}}
\renewcommand{\S}{\mathcal{S}}
\newcommand{\E}{\mathbb{E}}
\renewcommand{\P}{\mathcal{P}}
\newcommand{\sign}{\mathrm{sign}}
\newcommand{\tr}{\mathrm{tr}}
\renewcommand{\[}{\left[ }
\renewcommand{\]}{\right] }
\newcommand{\<}{\left< }
\renewcommand{\>}{\right> }
\renewcommand{\(}{\left( }
\renewcommand{\)}{\right) }
\newcommand{\wt}{\widetilde }
\newcommand{\wh}{\widehat }
\def\submission{0}
\begin{document} 

\title{Zeroth-order Low-rank Hessian Estimation via Matrix Recovery} 

\author{Tianyu Wang\footnote{wangtianyu@fudan.edu.cn} \quad Zicheng Wang\footnote{22110840011@m.fudan.edu.cn} \quad Jiajia Yu\footnote{jiajia.yu@duke.edu}} 

\date{} 

\maketitle 

\begin{abstract}

A zeroth-order Hessian estimator aims to recover the Hessian matrix of an objective function at any given point, using minimal finite-difference computations. This paper studies zeroth-order Hessian estimation for low-rank Hessians, from a matrix recovery perspective. Our challenge lies in the fact that traditional matrix recovery techniques are not directly suitable for our scenario. They either demand incoherence assumptions (or its variants), or require an impractical number of finite-difference computations in our setting. To overcome these hurdles, we employ zeroth-order Hessian estimations aligned with proper matrix measurements, and prove new recovery guarantees for these estimators. More specifically, we prove that for a Hessian matrix $H \in \mathbb{R}^{n \times n}$ of rank $r$, $ \mathcal{O}(nr^2 \log^2 n ) $ proper zeroth-order finite-difference computations ensures a highly probable exact recovery of $H$. Compared to existing methods, our method can greatly reduce the number of finite-difference computations, and does not require any incoherence assumptions.

\end{abstract}

\section{Introduction}


In machine learning, optimization and many other mathematical programming problems, the Hessian matrix plays an important role since it describes the landscape of the objective function. 
However, in many real-world scenarios, although we can access function values, the lack of analytic form for the objective function precludes direct Hessian computation. 
Therefore it is important to develop zeroth-order finite-difference Hessian estimators, i.e. to estimate the Hessian matrix by function evaluation and finite-difference.


Finite-difference Hessian estimation has a long history dating back to Newton's time. In recent years, the rise of large models and big data has posed the high-dimensionality of objective functions as a primary challenge in finite-difference Hessian estimation. To address this, stochastic Hessian estimators, like \citep{balasubramanian2021zeroth, wang2022hess, 10.1093/imaiai/iaad014, li2023stochastic}, have emerged to reduce the required number of function value samples. The efficiency of a Hessian estimator is measured by the \emph{sample complexity}, which quantifies the number of finite-difference computations needed.


Despite the high-dimensionality, the low-rank structure is prevalent in machine learning with high-dimensional datasets \citep{fefferman2016testing,doi:10.1137/18M1183480}. Numerous research directions, such as manifold learning \citep[e.g.,][]{ghojogh2023elements} and recommender systems \citep[e.g.,][]{resnick1997recommender}, actively leverage this low-rank structure. 
While there are many studies on stochastic Hessian estimators, as we detail in section \ref{sec:related-works}, none of them exploit the low-rank structure of the Hessian matrix. This omission can lead to overly conservative results and hinder the overall efficiency and effectiveness of the optimization or learning algorithms.
To fill in the gap, in this work, we develop an efficient finite-difference Hessian estimation method for low-rank Hessian via matrix recovery. While a substantial number of literature studies the sample complexity of low-rank matrix recovery, we emphasize that none of them are directly applicable to our scenario. This is either due to the overly restrictive global incoherence assumption or a prohibitively large number of finite-difference computations, as we discuss in detail in section \ref{sec:existing}. We develop a new method and prove that without the incoherence assumption, for an $n\times n$ Hessian matrix with rank $r$, we can exactly recover the matrix with high probability from $\mathcal{O}( nr^2 \log^2 n )$ proper zeroth-order finite-difference computations.

In the rest of this section, we present our problem formulation, discuss why existing matrix recovery methods fail on our problem and summarize our contribution.

\subsection{Hessian Estimation via Compressed Sensing Formulation}

To recover an $n \times n$ low-rank Hessian matrix $H$ using $\ll n^2 $ finite-difference operations, we use the following trace norm minimization approach \citep{fazel2002matrix,doi:10.1137/070697835,candes2010power,gross2011recovering,candes2012exact}: 
\begin{align} 
    \min_{\wh{H} \in \R^{n \times n} } \| \wh{H} \|_1 , 
    \quad 
    \text{ subject to } 
    \quad 
    \S \wh{H} = \S H , \label{eq:goal} 
\end{align} 
where $\S := \frac{1}{M} \sum_{i=1}^M \P_i$ and $\P_i$ is a matrix measurement operation that can be obtained via $\mathcal{O} (1)$ finite-difference computations. For our problem, it is worth emphasizing that $\P_i$ must satisfy the following requirements. 
\begin{itemize} 
    \item \textbf{(R1)} $\P_i$ is different from the sampling operation used for matrix completion. Otherwise an incoherence assumption is needed. See \textbf{(M1)} in Section \ref{sec:existing} for more details. 
    \item \textbf{(R2)} $\P_i$ cannot involve the inner product between the Hessian matrix and a general matrix, since this operation cannot be efficiently obtained through finite-difference computations. See \textbf{(M2)} in Section \ref{sec:existing} for more details. 
\end{itemize} 

Due to the above two requirements, existing theory for matrix recovery fails to provide satisfactory guarantees for low-rank Hessian estimation.

\subsection{Existing Matrix Recovery Methods} 
\label{sec:existing}

Existing methods for low-rank matrix recovery can be divided into two categories: matrix completion methods, and matrix recovery via linear measurements (or matrix regression type method). Unfortunately, both groups of methods are unsuitable for Hessian estimation tasks. 

\textbf{(M1) Matrix completion methods:} A candidate class of methods for low-rank Hessian estimation is matrix completion \citep{fazel2002matrix,cai2010singular,candes2010matrixnoise,candes2010power,5466511,lee2010admira,fornasier2011low,gross2011recovering,recht2011simpler,candes2012exact,hu2012fast,mohan2012iterative,negahban2012restricted,Wen2012,doi:10.1137/110845768,pmlr-v32-wanga14,chen2015incoherence,TANNER2016417,Gotoh2018,chen2020noisy,10375511}. 

The motivation for matrix completion tasks originated from the Netflix prize, where the challenge was to predict the ratings of all users on all movies based on only observing ratings of some users on some movies. In order to tackle such problems, it is necessary to assume that the nontrivial singular vectors of the matrix $H$ and the observation basis $\mathcal{B}$ are ``incoherent''. Incoherence \citep{candes2010power,gross2011recovering,candes2012exact,chen2015incoherence,negahban2012restricted}, or its alternatives \citep[e.g.,][]{negahban2012restricted}, implies that there is a sufficiently large angle between the singular vectors and the basis $\mathcal{B}$. The rationale behind this assumption can be explained as follows: Consider a matrix $H$ of size $n \times n$ with a one in its $(1,1)$ entry and zeros elsewhere. If we randomly observe a small fraction of the $n \times n$ entries, it is highly likely that we will miss the $(1,1)$ entry, making it difficult to fully recover the matrix. Therefore, an incoherence parameter $\nu$ is assumed between the given canonical basis $\mathcal{B}$ and the singular vectors of $H$, as illustrated in Figure \ref{fig:intro}. In the context of zeroth-order optimization, it is often necessary to recover the Hessian at any given point. However, assuming the Hessian is incoherence with the given basis over all points in the domain is overly restrictive. 

\textbf{(M2) Matrix recovery via linear measurements (matrix regression type recovery):} 
In the context of matrix recovery using linear measurements \citep{tan2011rank,ELDAR2012309,Chandrasekaran2012,RONG2021386}, we observe the inner product of the target matrix $H$ with a set of matrices $A_1, A_2, \cdots, A_M$. Specifically, we have the observation $ \< H, A_i \> := \tr ( H^* A_i ) $ and our goal is to recover $H$. In certain scenarios, there may be additional constraints on $A_i$ and the measurements might be corrupted by noise \citep{10.1214/10-AOS860,10.1214/20-AOS1980,doi:10.1080/01621459.2017.1389740}, which receives more attention from the statistics community. \cite{ELDAR2012309} proved that when the entries of $A_i$ are independently and identically distributed ($iid$) Gaussian, having $M \ge 4 nr - 4r^2 $ linear measurements ensures exact recovery of $H$. \cite{RONG2021386} showed that when the density of $(A_1, A_2, \cdots, A_M)$ is absolutely continuous, having $M > nr - r^2 $ measurements guarantees exact recovery of $H$. 

Despite the elegant results in matrix recovery using linear measurements, they are not applicable to Hessian estimation tasks. 
This limitation arises from the fact that a general linear measurement cannot be approximated by a zeroth-order estimation. 
To further illustrate this fact, let us consider the Taylor approximation, which, by the fundamental theorem of calculus, is the foundation for zeroth-order estimation. 
In the Taylor approximation of $f$ at $\x$, the Hessian matrix $\nabla^2 f (\x)$ will always appear as a bilinear form. Therefore, a linear measurement $\< A, \nabla^2 f (\x) \>$ for a general $A$ cannot be included in a Taylor approximation of $f$ at $\x$. 
In the language of optimization and numerical analysis, for a general measurement matrix $A$, one linear measurement $\< A ,H \>$ may require far more than $\mathcal{O}(1)$ finite-difference computations. Consequently, the theory providing guarantees for linear measurements does not extend to zeroth-order Hessian estimation. 







\subsection{Our Contribution}

In this paper, we introduce a low-rank Hessian estimation mechanism that simultaneously satisfies \textbf{(R1)} and \textbf{(R2)}. More specifically, 
\begin{itemize}
    \item We prove that, with a proper finite-difference scheme,
    $\mathcal{O} \( n r^2 \log^2 n  \)$ finite-difference computations are sufficient for guaranteeing an exact recovery of the Hessian matrix with high probability. Our approach simultaneously overcomes limitations of \textbf{(M1)} and \textbf{(M2)}. 
\end{itemize} 

In the realm of zeroth-order Hessian estimation, no prior arts provide high probability estimation guarantees for low-rank Hessian estimation tasks; See Section \ref{sec:related-works} for more discussions.

\subsection{Prior Arts on Hessian Estimation}  
\label{sec:related-works} 


Zeroth-order Hessian estimation dates back to the birth of calculus. In recent years, researchers from various fields have contributed to this topic \citep[e.g.,][]{broyden1973local,fletcher2000practical,spall2000adaptive,balasubramanian2021zeroth,li2023stochastic}.  




In quasi-Newton-type methods \citep[e.g.,][]{goldfarb1970family,shanno1970conditioning,broyden1973local,ren1983convergence,davidon1991variable,fletcher2000practical,spall2000adaptive,xu2001survey,Rodomanov2022}, gradient-based Hessian estimators were used for iterative optimization algorithms. 
Based on the Stein's identity \citep{10.1214/aos/1176345632}, \cite{balasubramanian2021zeroth} introduced a Stein-type Hessian estimator, and combined it with cubic regularized Newton's method \citep{nesterov2006cubic} for non-convex optimization. \cite{li2023stochastic} generalizes the Stein-type Hessian estimators to Riemannian manifolds. Parallel to \citep{balasubramanian2021zeroth,li2023stochastic}, \cite{wang2022hess,10.1093/imaiai/iaad014} investigated the Hessian estimator that inspires the current work. 

Yet prior to our work, no methods from the zeroth-order Hessian estimation community focuses on low-rank Hessian estimation.

\if\submission1
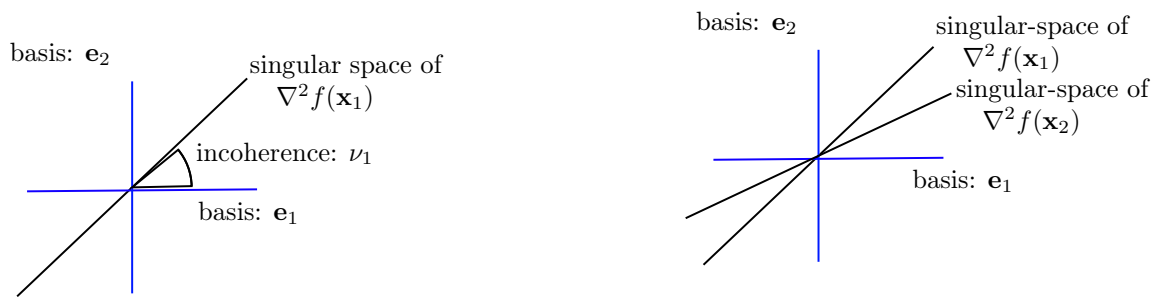
\begin{figure} 
    \begin{minipage}{.1\linewidth}
        \centering
        \tikzset{every picture/.style={line width=0.75pt}} 

\begin{tikzpicture}[x=0.75pt,y=0.75pt,yscale=-1,xscale=1]

\draw    (12.14,132.23) -- (128.14,22.23) ;
\draw [color={rgb, 255:red, 18; green, 0; blue, 255 }  ,draw opacity=1 ]   (17.14,79.23) -- (133.14,78.23) ;
\draw [color={rgb, 255:red, 0; green, 26; blue, 255 }  ,draw opacity=1 ]   (70.14,23.73) -- (70.14,130.73) ;
\draw  [draw opacity=0] (93.36,58.23) .. controls (97.48,63.26) and (99.99,69.64) .. (100.14,76.6) -- (70.14,77.23) -- cycle ; \draw   (93.36,58.23) .. controls (97.48,63.26) and (99.99,69.64) .. (100.14,76.6) ;  

\draw (128,9) node [anchor=north west][inner sep=0.75pt]   [align=left] {singular space of \\ \quad $\nabla^2 f (\x_1)$};
\draw (102,83) node [anchor=north west][inner sep=0.75pt]   [align=left] {basis: $\e_1$};
\draw (7,3) node [anchor=north west][inner sep=0.75pt]   [align=left] {basis: $\e_2$};
\draw (101,53) node [anchor=north west][inner sep=0.75pt]   [align=left] {incoherence: $\nu_1$};

\end{tikzpicture} 
    \end{minipage} 
    \hspace{4.2cm}
    \begin{minipage}{.1\linewidth}
        \centering
        \tikzset{every picture/.style={line width=0.75pt}} 

\begin{tikzpicture}[x=0.75pt,y=0.75pt,yscale=-1,xscale=1]

\draw    (14.14,131.23) -- (130.14,21.23) ;
\draw [color={rgb, 255:red, 18; green, 0; blue, 255 }  ,draw opacity=1 ]   (19.14,78.23) -- (135.14,77.23) ;
\draw [color={rgb, 255:red, 0; green, 26; blue, 255 }  ,draw opacity=1 ]   (72.14,22.73) -- (72.14,129.73) ;
\draw    (5.14,107.73) -- (139.14,44.73) ;

\draw (130,4) node [anchor=north west][inner sep=0.75pt]   [align=left] {singular-space of\\ \quad$\nabla^2 f (\x_1)$};
\draw (118,82) node [anchor=north west][inner sep=0.75pt]   [align=left] {basis: $\e_1$};
\draw (9,2) node [anchor=north west][inner sep=0.75pt]   [align=left] {basis: $\e_2$};
\draw (140,36) node [anchor=north west][inner sep=0.75pt]   [align=left] {singular-space of\\ \quad$\nabla^2 f (\x_2)$};

\end{tikzpicture} 
    \end{minipage} 
    \caption{Incoherence condition for $\nabla^2 f (\x)$ at multiple points. When the Hessian of $f$ is low-rank or approximately low-rank, a matrix completion guarantee for $\nabla^2 f (\x)$ at all $\x$ requires an incoherence condition to hold uniformly over $\x$. As illustrated in the right subfigure, such requirement is overly restrictive. } 
    \label{fig:intro} 
\end{figure} 
\fi 

\if\submission0 
\begin{figure} 
    \begin{centering}
    \begin{subfigure}[b]{0.4\textwidth}
         \tikzset{every picture/.style={line width=0.75pt}} 

\begin{tikzpicture}[x=0.75pt,y=0.75pt,yscale=-1,xscale=1]

\draw    (12.14,132.23) -- (128.14,22.23) ;
\draw [color={rgb, 255:red, 18; green, 0; blue, 255 }  ,draw opacity=1 ]   (17.14,79.23) -- (133.14,78.23) ;
\draw [color={rgb, 255:red, 0; green, 26; blue, 255 }  ,draw opacity=1 ]   (70.14,23.73) -- (70.14,130.73) ;
\draw  [draw opacity=0] (93.36,58.23) .. controls (97.48,63.26) and (99.99,69.64) .. (100.14,76.6) -- (70.14,77.23) -- cycle ; \draw   (93.36,58.23) .. controls (97.48,63.26) and (99.99,69.64) .. (100.14,76.6) ;  

\draw (128,9) node [anchor=north west][inner sep=0.75pt]   [align=left] {singular space of \\ \quad $\nabla^2 f (\x_1)$};
\draw (102,83) node [anchor=north west][inner sep=0.75pt]   [align=left] {basis: $\e_1$};
\draw (7,3) node [anchor=north west][inner sep=0.75pt]   [align=left] {basis: $\e_2$};
\draw (101,53) node [anchor=north west][inner sep=0.75pt]   [align=left] {incoherence: $\nu_1$};

\end{tikzpicture} 
     \end{subfigure} 
     \hfill 
     \begin{subfigure}[b]{0.4\textwidth}
         \tikzset{every picture/.style={line width=0.75pt}} 

\begin{tikzpicture}[x=0.75pt,y=0.75pt,yscale=-1,xscale=1]

\draw    (14.14,131.23) -- (130.14,21.23) ;
\draw [color={rgb, 255:red, 18; green, 0; blue, 255 }  ,draw opacity=1 ]   (19.14,78.23) -- (135.14,77.23) ;
\draw [color={rgb, 255:red, 0; green, 26; blue, 255 }  ,draw opacity=1 ]   (72.14,22.73) -- (72.14,129.73) ;
\draw    (5.14,107.73) -- (139.14,44.73) ;

\draw (130,4) node [anchor=north west][inner sep=0.75pt]   [align=left] {singular-space of\\ \quad$\nabla^2 f (\x_1)$};
\draw (118,82) node [anchor=north west][inner sep=0.75pt]   [align=left] {basis: $\e_1$};
\draw (9,2) node [anchor=north west][inner sep=0.75pt]   [align=left] {basis: $\e_2$};
\draw (140,36) node [anchor=north west][inner sep=0.75pt]   [align=left] {singular-space of\\ \quad$\nabla^2 f (\x_2)$};

\end{tikzpicture} 
     \end{subfigure}
    \end{centering}
    \caption{Incoherence condition for $\nabla^2 f (\x)$ at multiple points. When the Hessian of $f$ is low-rank or approximately low-rank, a matrix completion guarantee for $\nabla^2 f (\x)$ at all $\x$ requires an incoherence condition to hold uniformly over $\x$. As illustrated in the right subfigure, such requirement is overly restrictive. } 
    \label{fig:intro} 
\end{figure} 
\fi

\section{Notations and Conventions}

\label{sec:not}

Before proceeding to main results, we lay out some conventions and notations that will be used throughout the paper. We use the following notations for matrix norms: 
\begin{itemize} 
    \item $\| \cdot \|$ is the operator norm (Schatten $\infty$-norm); 
    \item $\| \cdot \|_2$ is the Euclidean norm (Schatten $2$-norm); 
    \item $\| \cdot \|_1$ is the trace norm (Schatten $1$-norm).  
\end{itemize} 
Also, the notation $\| \cdot \|$ is overloaded for vector norm and tensor norm. For a vector $ \v \in \R^n $, $\| \cdot \|$ is its Euclidean norm; For a tensor $ V \in \( \R^n \)^{\otimes p} $ ($p \ge 2$), $\| \cdot \|$ is its Schatten $\infty$-norm. 
For any matrix $A$ with singular value decomposition $ A = U \Sigma V^\top $, we define $\sign(A) = U \sign (\Sigma) V^\top $ where $ \sign (\Sigma) $ applies a $\sign$ function to each entry of $\Sigma$. 

For a vector $\u = \( u_1,u_2,\cdots, u_n \)^\top \in \R^n$ and a positive number $r \le n$, we define notations 
\begin{align*}
    \u_{:r} = \( u_1, u_2, \cdots, u_r, 0,0,\cdots, 0 \)^\top \; \text{and} \;  \u_{r:} = \(  0,0,\cdots, 0 , u_r, u_{r+1}, \cdots, u_n \)^\top . 
\end{align*}


Also, we use $C$ and $c$ to denote unimportant absolute constants that does not depend on $n$ or $r$. The numbers $C$ and $c$ may or may not take the same value at each occurrence. 





\section{Main Results} 

\label{sec:main}





We start with a finite-difference scheme that can be viewed as a matrix measurement operation. 
The Hessian of a function $f : \R^n \to \R$ at a given point $\x$ can be estimated as follows \citep{wang2022hess,10.1093/imaiai/iaad014} 
\begin{align}
    &\wh{\nabla}^2 f (\x) 
    := \nonumber \\
    &n^2 \frac{ f (\x + \delta \v + \delta \u ) - f (\x - \delta \v + \delta \u ) - f (\x + \delta \v - \delta \u ) + f (\x - \delta \v - \delta \u ) }{ 4 \delta^2 } \u \v^\top , \label{eq:hess-est}
\end{align} 
where $\delta$ is the finite-difference granularity, and $ \u, \v $ are finite-difference directions. Difference choices of laws of $\u$ and $\v$ leads to different Hessian estimators. For example, $ \u, \v $ can be independent vectors uniformly distributed over the canonical basis $\{ \e_1, \e_2, \cdots, \e_n \}$. 

We start our discussion by showing that the Hessian estimator (\ref{eq:hess-est}) can indeed be viewed as a matrix measurement. 

\begin{proposition} 
    \label{prop:measure}
    Consider an estimator defined in (\ref{eq:hess-est}). Let the underlying function $f$ be twice continuously differentiable. Let $\u,\v$ be two random vectors such that $ \| \u \| , \| \v \| < \infty $ $a.s$. Then for any fixed $\x \in \R^n$, 
    \begin{align*}
        \wh{\nabla}^2 f (\x) \to_d n^2 \u \u^\top \nabla^2 f (\x) \v \v^\top
    \end{align*}
    as $\delta \to 0_+$, where $\to_d$ denotes convergence in distribution. 
\end{proposition}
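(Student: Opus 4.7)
The plan is to show the stronger statement of almost sure convergence (which implies the desired convergence in distribution), by unpacking the four-point finite difference via second-order Taylor expansion of $f$ at $\x$. For each fixed realization of $(\u, \v)$ with $\|\u\|, \|\v\| < \infty$, the quantities $\delta(\pm\v \pm \u)$ go to $0$ as $\delta \to 0_+$, so I can write four Peano-remainder expansions of the form
\begin{align*}
    f(\x + \delta\v + \delta\u) = f(\x) + \delta \nabla f(\x)^\top (\v + \u) + \tfrac{\delta^2}{2} (\v + \u)^\top \nabla^2 f(\x) (\v + \u) + o(\delta^2),
\end{align*}
and likewise for the other three sign combinations $(-\v + \u)$, $(\v - \u)$, $(-\v - \u)$.

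Next I would substitute these into the numerator of (\ref{eq:hess-est}) with signs $(+,-,-,+)$ and verify the cancellations termwise. The constant terms $f(\x)$ cancel by $1-1-1+1 = 0$. The linear terms $\delta \nabla f(\x)^\top(\pm \v \pm \u)$ cancel since the four sign vectors sum to zero. In the quadratic block, the $\v^\top \nabla^2 f(\x) \v$ and $\u^\top \nabla^2 f(\x) \u$ contributions also cancel, while the mixed terms $\v^\top \nabla^2 f(\x) \u$ and $\u^\top \nabla^2 f(\x) \v$ each pick up a factor of $4$. By symmetry of $\nabla^2 f(\x)$, this consolidates into $4\delta^2 \u^\top \nabla^2 f(\x) \v$ plus an $o(\delta^2)$ term. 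Dividing by $4\delta^2$ gives
\begin{align*}
    \frac{f(\x + \delta\v + \delta\u) - f(\x - \delta\v + \delta\u) - f(\x + \delta\v - \delta\u) + f(\x - \delta\v - \delta\u)}{4\delta^2} = \u^\top \nabla^2 f(\x) \v + R_\delta(\u, \v),
\end{align*}
where $R_\delta(\u, \v) \to 0$ as $\delta \to 0_+$ for each realization with $\|\u\|, \|\v\| < \infty$.

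From here, multiplying by $n^2 \u \v^\top$ yields $\wh{\nabla}^2 f(\x) = n^2 \bigl( \u^\top \nabla^2 f(\x) \v \bigr) \u \v^\top + n^2 R_\delta(\u, \v) \u \v^\top$. Since $\|\u\|, \|\v\| < \infty$ almost surely, the error term $n^2 R_\delta(\u, \v) \u \v^\top$ tends to the zero matrix almost surely, hence in probability. Using the scalar identity $\bigl( \u^\top \nabla^2 f(\x) \v \bigr) \u \v^\top = \u \u^\top \nabla^2 f(\x) \v \v^\top$ together with Slutsky's theorem concludes the argument.

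The main obstacle, such as it is, is not conceptual but bookkeeping: carefully tracking the eight sign combinations in the four expanded quadratic forms $(\pm \v \pm \u)^\top \nabla^2 f(\x) (\pm \v \pm \u)$ to confirm that only the mixed cross-terms $\u^\top \nabla^2 f(\x) \v$ survive after the $(+,-,-,+)$ combination. The regularity assumption $f \in C^2$ is exactly what powers the Peano remainder, and the a.s. finiteness of $\|\u\|, \|\v\|$ is what lets the pointwise Taylor remainder be promoted to a.s. (hence distributional) convergence.
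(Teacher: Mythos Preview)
Your proposal is correct and follows essentially the same approach as the paper: both expand the four-point difference via second-order Taylor at $\x$, observe that only the cross term $\u^\top \nabla^2 f(\x)\,\v$ survives, and then pass to the outer-product form $\u\u^\top \nabla^2 f(\x)\,\v\v^\top$. The paper's write-up groups the four evaluations into the two quadratic forms $(\v+\u)^\top\nabla^2 f(\x)(\v+\u)$ and $(\v-\u)^\top\nabla^2 f(\x)(\v-\u)$ rather than tracking all eight cross-terms separately, but the algebra is identical.

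One small point in your favor: under the stated $C^2$ hypothesis only the Peano remainder $o(\delta^2)$ is available, which is exactly what you use; the paper's displayed $\mathcal{O}\bigl(\delta(\|\v\|+\|\u\|)^3\bigr)$ error bound tacitly assumes an extra order of smoothness. Your route through almost sure convergence (and then Slutsky) is also slightly more explicit than the paper's one-line passage to $\to_d$.
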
 

\begin{proof} 
    
    By Taylor's Theorem (with integrable remainder) and that the Hessian matrix is symmetric, we have 
    \begin{align*} 
        \wh{\nabla}^2 f (\x) 
        =& \; 
        \frac{n^2}{4} 
        \(  \( \v + \u \)^\top \nabla^2 f (\x) \( \v + \u \) - \( \v - \u \)^\top \nabla^2 f (\x) \( \v - \u \) \) \u \v^\top \\
        &+ \mathcal{O} \( \delta \( \| \v \| + \| \u \| \)^3 \) \\ 
        =& \; 
        n^2 \u^\top \nabla^2 f (\x) \v \u \v^\top + \mathcal{O} \( \delta \( \| \v \| + \| \u \| \)^3 \) \\
        =&\; 
        n^2 \u  \u^\top \nabla^2 f (\x) \v \v^\top + \mathcal{O} \( \delta \( \| \v \| + \| \u \| \)^3 \) . 
    \end{align*} 
    
    As $\delta \to 0_+$, the estimator (\ref{eq:hess-est}) converges to $ n^2 \u \u^\top \nabla^2 f (\x) \v \v^\top $ in distribution. 

\end{proof}

With Proposition \ref{prop:measure} in place, we see that matrix measurements of the form 
\begin{align*} 
    \P: H \mapsto n^2 \u \u^\top H \v \v^\top 
\end{align*} 
for some $\u,\v$ can be efficiently computed via finite-difference computations. For the convex program (\ref{eq:goal}) with sampling operators taking the above form, we have the following guarantee. 






\begin{theorem} 
    \label{thm:main} 
    Consider the problem (\ref{eq:goal}). Let the sampler $ \S = \frac{1}{M} \sum_{i=1}^M \P_i $ be constructed with $ \P_i : A \mapsto n^2 \u_i\u_i^\top A \v_i \v_i^\top $ and $ \u_i, \v_i \overset{iid}{\sim} \text{Unif} ( \mathbb{S}^{n-1}) $. 
    Then there exists an absolute constant $C$, such that if the number of samples $M \ge C\cdot n r^2 \log^2 (n) $ where $r := rank (H)$, then 
    with probability larger than $1 - \frac{1}{n}$, the solution to (\ref{eq:goal}), denoted by $\wh{H}$, satisfies $ \wh{H} = H $. 
\end{theorem}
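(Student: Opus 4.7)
The plan is to follow the dual-certificate and golfing-scheme framework for nuclear-norm recovery, in the spirit of Cand\`es--Recht, Gross, and Recht, adapted to the rank-one bilinear measurements $\P_i : A \mapsto n^2 \u_i\u_i^\top A \v_i \v_i^\top$. As a key preliminary, for $\u$ uniform on $\mathbb{S}^{n-1}$ we have $\E[\u\u^\top] = I/n$, so by independence of $\u_i$ and $\v_i$ it follows that $\E[\P_i] = \mathrm{Id}$, and hence $\E[\S] = \mathrm{Id}$ as super-operators on $\R^{n\times n}$. Observe also that each $\P_i$ is a self-adjoint rank-one operator on $\R^{n\times n}$ with $\P_i(A) = n^2 \langle \u_i \v_i^\top, A\rangle\, \u_i \v_i^\top$.

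Write $H = U\Sigma V^\top$ for the compact SVD and let $T = \{UX^\top + YV^\top : X, Y \in \R^{n\times r}\}$ be the tangent space at $H$, with orthogonal projections $\mathcal{P}_T$ and $\mathcal{P}_{T^\perp}$. The standard Lagrangian/KKT argument for (\ref{eq:goal}) reduces exact recovery of $H$ to two conditions: (i) a restricted injectivity bound $\|\mathcal{P}_T \S \mathcal{P}_T - \mathcal{P}_T\| \le 1/2$ in Frobenius-to-Frobenius operator norm, and (ii) the existence of a dual certificate $Y \in \mathrm{range}(\S)$ with $\|\mathcal{P}_T Y - UV^\top\|_2$ negligibly small and $\|\mathcal{P}_{T^\perp} Y\| < 1$. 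For (i), I would apply the matrix Bernstein inequality to $\frac{1}{M}\sum_i(\mathcal{P}_T\P_i\mathcal{P}_T - \mathcal{P}_T)$. The deterministic and variance ingredients both reduce to controlling $\|\mathcal{P}_T(\u\v^\top)\|_2^2 \lesssim \|U^\top\u\|^2 + \|V^\top\v\|^2$; since $U,V$ have $r$ orthonormal columns and $\u,\v$ are uniform on $\mathbb{S}^{n-1}$, each term concentrates at order $r/n$ up to logarithmic factors. This establishes (i) once $M \gtrsim nr\log n$.

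For (ii) I would employ Gross's golfing scheme: partition the $M$ samples into $L = \lceil c\log n\rceil$ independent batches of size $m = M/L$ with samplers $\S_1, \ldots, \S_L$; set $W_0 = UV^\top$, $Y_0 = 0$, and for $\ell \ge 1$ iterate
\begin{equation*}
Y_\ell = Y_{\ell-1} + \S_\ell(W_{\ell-1}), \qquad W_\ell = UV^\top - \mathcal{P}_T Y_\ell = (\mathcal{P}_T - \mathcal{P}_T\S_\ell\mathcal{P}_T)W_{\ell-1}.
\end{equation*}
Applying (i) within each batch (with a union bound over the $L$ batches) contracts $\|W_\ell\|_2$ by a factor of $1/2$ per step, so $\|W_L\|_2 \le n^{-10}$, which produces the required $\mathcal{P}_T$-component with $Y := Y_L$.

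The principal obstacle is controlling $\|\mathcal{P}_{T^\perp}Y_L\| \le \sum_{\ell=1}^{L}\|\mathcal{P}_{T^\perp}\S_\ell(W_{\ell-1})\|$. Since $W_{\ell-1}$ is measurable with respect to the earlier batches and hence independent of $\S_\ell$, I would condition on $W_{\ell-1}$ and apply a second matrix Bernstein within each batch. The delicate point is that $\|\P_i(W)\| = n^2|\u_i^\top W\v_i|$ exactly, and spherical concentration of this bilinear form yields $|\u^\top W\v| \lesssim (\log n / n)\|W\|_2 + \sqrt{(\log n)/n}\,\|W\|$ with high probability, so tracking \emph{both} $\|W_\ell\|_2$ and $\|W_\ell\|$ in a coupled recursion is necessary to iterate. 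Converting the Frobenius-norm contraction to an operator-norm contraction via the slack $\|W\|_2 \le \sqrt{2r}\|W\|$ (valid for $W \in T$) is what produces the extra factor of $r$ in the sample complexity, yielding a per-batch estimate of the form $\|\mathcal{P}_{T^\perp}\S_\ell(W_{\ell-1})\| \lesssim \sqrt{nr\log n / m}\,\|W_{\ell-1}\|$. Summing over the $L = O(\log n)$ batches then forces $m \gtrsim nr^2 \log n$, i.e.\ $M \gtrsim nr^2 \log^2 n$. A final union bound over the batches and over conditions (i) and (ii) delivers the stated $1 - 1/n$ success probability.
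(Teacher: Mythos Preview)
Your outline is the same architecture the paper uses: Lemma~\ref{lem:prepare} gives the subgradient inequality, Lemma~\ref{lem:op-concen} is your restricted-injectivity step (i), Lemma~\ref{lem:exist-cert-2} is your golfing scheme, and Lemma~\ref{lem:mat-concen} is the per-batch operator-norm bound $\|\P_{T}^\perp \S_\ell W\|$. The one substantive difference is how that last estimate is obtained. The paper does not truncate and invoke bounded-difference Bernstein; instead it controls \emph{all} moments of the dilation $A=\begin{bmatrix}0&B\\ B^\top&0\end{bmatrix}$ directly via rotation-invariance (reducing $G$ to diagonal $\Lambda$) and the combinatorial inequality Lemma~\ref{lem:wu}, then feeds these into the moment-form Bernstein inequality (Theorem~\ref{thm:zhu}). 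This yields $\E[(B^\top B)^p]\preceq\frac{(2p)!}{2}\|G\|^{2p}(Cnr)^{2p-1}I$, from which the variance-and-range pair $(\Sigma_2,B)\sim(nr\|G\|^2,\;nr\|G\|)$ can be read off, and the $nr^2$ requirement follows cleanly without any truncation step.

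One place where your sketch would fail as written: the tail you quote for the bilinear form, $|\u^\top W\v|\lesssim (\log n/n)\|W\|_2+\sqrt{(\log n)/n}\,\|W\|$, has the second term at the wrong scale. The bound $\sqrt{(\log n)/n}\,\|W\|$ is a valid upper bound (it follows from $|\u^\top W\v|\le\|W^\top\u\|\cdot\sqrt{(\log n)/n}$ and $\|W^\top\u\|\le\|W\|$), but it is too crude: it makes the almost-sure range in Bernstein of order $n^{3/2}\sqrt{\log n}\,\|W\|$, and for small $r$ (e.g.\ $r=O(1)$) the range term then forces $m\gtrsim n^{3/2}$, overwhelming the target $m\gtrsim nr^2\log n$. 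The correct Hanson--Wright-type tail on the sphere is $|\u^\top W\v|\lesssim (\sqrt{\log n}/n)\|W\|_2+((\log n)/n)\|W\|$; with that correction your truncation route goes through and recovers the paper's sample complexity. The paper's moment approach sidesteps this delicacy entirely, which is its chief advantage.
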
 


As a direct consequence of Theorem \ref{thm:main}, we have the following result. 
\begin{corollary} 
    \label{cor:hess}
    Let the finite-difference granularity $\delta > 0$ be small. Let $\x \in \R^n$ and let $f$ be twice continuously differentiable. 
    Suppose there exists $ H $ with $rank (H) = r$ such that $ \| H - \nabla^2 f (\x) \| \le \epsilon$ for some $\epsilon \ge 0$, and the estimator (\ref{eq:hess-est}) with $ \u , \v \overset{iid}{\sim} \mathrm{Unif} (\mathbb{S}^{n-1}) $ satisfies 
    \begin{align*} 
        &\;\frac{ f (\x + \delta \v + \delta \u ) - f (\x - \delta \v + \delta \u ) - f (\x + \delta \v - \delta \u ) + f (\x - \delta \v - \delta \u ) }{ 4 \delta^2 } \u \v^\top \\
        =_d &\;
        \u \u^\top H \v \v^\top ,
    \end{align*} 
    where $ =_d $ denotes distributional equivalence. 
    There exists an absolute constant $C$, such that if more than $C \cdot nr^2 \log^2 n $ zeroth-order finite-difference are obtained, then with probability exceeding $ 1 - \frac{1}{n} $, the solution $\wh{H}$ to (\ref{eq:goal}) satisfies $ \| \wh{H} - \nabla^2 f (\x) \| \le \epsilon $. 
\end{corollary}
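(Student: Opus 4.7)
The plan is to reduce Corollary \ref{cor:hess} to a direct application of Theorem \ref{thm:main} applied to the rank-$r$ matrix $H$, and then use the triangle inequality to transfer the guarantee back to $\nabla^2 f(\x)$. The distributional hypothesis is precisely what is needed to bridge the finite-difference world and the exact matrix-measurement world of Theorem \ref{thm:main}.

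First, I would observe that the sampler in Theorem \ref{thm:main} is built from operators of the form $\P_i : A \mapsto n^2 \u_i \u_i^\top A \v_i \v_i^\top$ with $\u_i, \v_i \overset{iid}{\sim} \mathrm{Unif}(\mathbb{S}^{n-1})$. The corollary's hypothesis states that each finite-difference output $\tfrac{f(\x+\delta\v+\delta\u)-f(\x-\delta\v+\delta\u)-f(\x+\delta\v-\delta\u)+f(\x-\delta\v-\delta\u)}{4\delta^2}\u\v^\top$ is distributionally identical to $\u\u^\top H \v\v^\top$ for the rank-$r$ matrix $H$. Consequently, the measured data, which drives the constraint $\S\wh{H}=\S H$ in the convex program (\ref{eq:goal}), is distributionally equivalent to the measurement of $H$ by $\S$ (the $n^2$ factor is just a scalar normalization, common to both sides of the constraint). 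Collecting $Cnr^2\log^2 n$ such finite-difference computations thus yields $M \ge Cnr^2 \log^2 n$ rank-one matrix measurements of $H$, each of the form required by Theorem \ref{thm:main}.

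Next, I would invoke Theorem \ref{thm:main} with $H$ (rank $r$) in place of the generic target matrix. This immediately yields: with probability at least $1-\tfrac{1}{n}$, the minimizer $\wh{H}$ of (\ref{eq:goal}) satisfies $\wh{H}=H$. Finally, applying the assumption $\|H - \nabla^2 f(\x)\| \le \epsilon$ gives $\|\wh{H}-\nabla^2 f(\x)\| = \|H-\nabla^2 f(\x)\| \le \epsilon$, which is the desired conclusion.

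The main (and essentially only) obstacle is bookkeeping around the distributional hypothesis: one has to be careful that the collection of finite-difference outputs, viewed jointly across $i=1,\dots,M$, agrees in distribution with $\{\P_i H\}_{i=1}^M$, not merely marginally for each $i$. Since the $(\u_i,\v_i)$ are drawn independently across $i$ and the hypothesis supplies distributional equivalence per draw, this extends coordinatewise to the joint law, and Theorem \ref{thm:main} applies without modification. All other elements -- the counting of finite-difference operations (each $\P_i$ requiring $\mathcal{O}(1)$ function evaluations, so $M$ samples cost $\mathcal{O}(nr^2\log^2 n)$ finite-difference computations), the probability bound, and the norm comparison -- are routine.
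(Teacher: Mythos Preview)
Your proposal is correct and matches the paper's approach: the paper also treats Corollary~\ref{cor:hess} as an immediate consequence of Theorem~\ref{thm:main}, noting only that ``Theorem~\ref{thm:main}, together with Proposition~\ref{prop:measure}, establishes Corollary~\ref{cor:hess}.'' Your write-up simply makes explicit the two steps the paper leaves implicit---invoking Theorem~\ref{thm:main} via the distributional-equivalence hypothesis to obtain $\wh{H}=H$, then substituting to get $\|\wh{H}-\nabla^2 f(\x)\|=\|H-\nabla^2 f(\x)\|\le\epsilon$.
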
 

By Proposition \ref{prop:measure}, we know as $\delta \to 0^+$, 
\begin{align*}
    \frac{ f (\x + \delta \v + \delta \u ) - f (\x - \delta \v + \delta \u ) - f (\x + \delta \v - \delta \u ) + f (\x - \delta \v - \delta \u ) }{ 4 \delta^2 } \u \v^\top
\end{align*} 
converges to $ \u \u^\top \nabla^2 f (\x) \v \v^\top $ in distribution. Therefore, Corollary \ref{cor:hess} implies that the estimator (\ref{eq:hess-est}) together with a convex program (\ref{eq:goal}) provides a sample efficient low-rank Hessian estimator. Corollary \ref{cor:hess} also implies a guarantee for approximately low-rank Hessian. 

The rest of this section is devoted to proving Theorem \ref{thm:main} and thus also Corollary \ref{cor:hess}. 

\subsection{Preparations}

To describe the recovering argument for a symmetric low-rank matrix $H \in \R^{n\times n}$ with $ rank (H) = r $, we consider the eigenvalue decomposition of $H = U \Lambda U^\top$ ($U \in \R^{n \times r}$ and $\Lambda \in \R^{r \times r}$), and a subspace of $\R^{n\times n}$ defined by 
\begin{align*} 
    T := \{ A \in \R^{n \times n} : \( I - P_U \) A \( I - P_U \) = 0 \} , 
\end{align*}  
where $P_U$ is the projection onto the columns of $U$. 
We also define a projection operation onto $T$:  
\begin{align*} 
    \P_T : A \mapsto P_U A + A P_U - P_U A P_U . 
\end{align*} 



Let $\wh{H}$ be the solution of (\ref{eq:goal}) and let $\Delta := \wh{H} - H$. We start with the following lemma, which can be extracted from matrix completion literature \citep[e.g.,][]{candes2010power,gross2011recovering,candes2012exact}. 

\begin{lemma} 
    \label{lem:prepare} 
    Let $ \wh{H} $ be the solution of the program (\ref{eq:goal}) and let $ \Delta := \wh{H} - H $. 
    Then it holds that 
    \begin{align} 
        \< \sign(H),  P_{U} \Delta P_{U} \> + \| \Delta_{T}^\perp \|_1 
        \le 0 , \label{eq:need-cert} 
    \end{align} 
    where $ \Delta_{T}^\perp := \P_{T}^\perp \Delta $. 
\end{lemma}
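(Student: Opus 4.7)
The plan is to combine the optimality of $\wh{H}$ with the subdifferential characterization of the trace norm at $H$. Because $\wh{H}$ is feasible in (\ref{eq:goal}) (it satisfies $\S \wh{H} = \S H$) and minimizes $\|\cdot\|_1$, we have $\|H + \Delta\|_1 = \|\wh{H}\|_1 \le \|H\|_1$. By convexity of the trace norm, $\|H+\Delta\|_1 \ge \|H\|_1 + \<G, \Delta\>$ for every subgradient $G \in \partial \|\cdot\|_1 (H)$, so $\<G, \Delta\> \le 0$. The task thus reduces to choosing a particular $G$ whose pairing with $\Delta$ reproduces the two terms on the left-hand side of (\ref{eq:need-cert}).

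Next I would invoke the classical characterization $\partial \|\cdot\|_1(H) = \{\sign(H) + W : W \in T^\perp,\ \|W\| \le 1\}$. Here $\sign(H) = U \sign(\Lambda) U^\top$ already lies in $T$ because $P_U \sign(H) P_U = \sign(H)$, and $W \in T^\perp$ is equivalent to $P_U W = W P_U = 0$. The concrete choice to make is $W := \sign(\Delta_T^\perp)$. Since $\Delta_T^\perp = (I - P_U) \Delta_T^\perp (I - P_U)$, its left and right singular vectors are orthogonal to the column span of $U$, so $\sign(\Delta_T^\perp) \in T^\perp$, and $\|\sign(\Delta_T^\perp)\| \le 1$ is automatic. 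Therefore $G := \sign(H) + \sign(\Delta_T^\perp)$ is an admissible subgradient.

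Finally I would simplify $\<G, \Delta\>$ in two pieces. Using $\sign(H) = P_U \sign(H) P_U$ together with the cyclic property of the trace, $\<\sign(H), \Delta\> = \<\sign(H), P_U \Delta P_U\>$. Since $\P_T W = 0$, $\<W, \Delta\> = \<W, \P_T^\perp \Delta\> = \<\sign(\Delta_T^\perp), \Delta_T^\perp\> = \|\Delta_T^\perp\|_1$ by trace-norm/operator-norm duality. Substituting both identities into $\<G, \Delta\> \le 0$ gives exactly (\ref{eq:need-cert}).

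I do not expect any real obstacle here: the argument is a textbook application of convex analysis to the trace norm, and the only subtlety is verifying that $\sign(\Delta_T^\perp) \in T^\perp$, which follows from the support structure of $\Delta_T^\perp$. The substantive work in the paper will come afterwards, when (\ref{eq:need-cert}) must be combined with a dual certificate built from the specific sampler $\S$ to force $\Delta = 0$; but that lies beyond the present lemma.
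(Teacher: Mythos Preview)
Your argument is correct and complete. It differs in presentation from the paper's proof, though the two are close cousins. The paper proceeds by the pinching inequality
\[
\|H+\Delta\|_1 \ge \|P_U(H+\Delta)P_U\|_1 + \|P_U^\perp(H+\Delta)P_U^\perp\|_1 = \|H + P_U\Delta P_U\|_1 + \|\Delta_T^\perp\|_1,
\]
and then applies H\"older with $\|\sign(H)\|=1$ to the first term to extract $\|H\|_1 + \<\sign(H), P_U\Delta P_U\>$. You instead invoke the subdifferential description $\partial\|\cdot\|_1(H)=\{\sign(H)+W:\ W\in T^\perp,\ \|W\|\le 1\}$ and pick $W=\sign(\Delta_T^\perp)$, then read off both terms from a single subgradient inequality. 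Your route is slightly slicker for a reader fluent in convex analysis and makes the role of $T^\perp$ transparent from the outset; the paper's route is more self-contained, requiring only the pinching inequality and trace duality rather than the full subdifferential characterization. Neither buys anything extra for the downstream argument.
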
 

\begin{proof}


    Since $H \in T$, we have 
    \begin{align} 
        & \; \| H + \Delta \|_1 
        \ge 
        \| P_{U} ( H + \Delta ) P_{U} \|_1 + \| P_{U}^\perp ( H + \Delta ) P_{U}^\perp \|_1 \\
        =& \;  
        \| H + P_{U} \Delta P_{U} \|_1 + \| \Delta_{T}^\perp \|_1 , \label{eq:lem1-1}
    \end{align}
    where the first inequality uses the ``pinching'' inequality (Exercise II.5.4 \& II.5.5 in \citep{bhatia1997matrix}).  

    Since $ \| \sign (H) \| = 1 $, we continue the above computation, and get 
    \begin{align}
        (\ref{eq:lem1-1})
        =& \; 
        \| \sign (H) \| \| H + P_{U} \Delta P_{U} \|_1 + \| \Delta_{T}^\perp \|_1 \nonumber \\ 
        \ge & \; 
        \< \sign (H) , H + P_{U} \Delta P_{U} \> + \| \Delta_{T}^\perp \|_1 \nonumber \\ 
        =& \; 
        \| H \|_1 + \< \sign(H),  P_{U} \Delta P_{U} \> + \| \Delta_{T}^\perp \|_1. \label{eq:lem1-2} 
    \end{align} 
    On the second line, we use the H\"older's inequality. On the third line, we use that $ \| A \|_1 = \< \sign (A) , A \> $ for any real matrix $A$.
    
    Since $ \wh{H} $ solves (\ref{eq:goal}), we know $ \| H \|_1 \ge \| \wh{H} \|_1 = \| H + \Delta \|_1 $. Thus rearranging terms in (\ref{eq:lem1-2}) finishes the proof. 
\end{proof}

\subsection{The High Level Roadmap} 

With estimator (\ref{eq:hess-est}) and Lemma \ref{lem:prepare} in place, we are ready to present the high-level roadmap of our argument. 
On a high level, the rest of the paper aims to prove the following two arguments: 
\begin{itemize} 
    \item \textbf{(A1):} With high probability, $ \| \Delta_T \|_2 \le 2n \| \Delta_T^\perp \|_2 $, where $\Delta_T := \P_{T} \Delta $. 
    \item \textbf{(A2):} With high probability, $ \< \sign(H),  P_{U} \Delta P_{U} \> \ge - \frac{1}{n^{20}} \| \Delta_T \|_1 - \frac{1}{2} \| \Delta_T^\perp \|_1 $, where $\Delta_T^\perp := \Delta - \Delta_T$. 
\end{itemize} 

Once \textbf{(A1)} and \textbf{(A2)} are in place, we can quickly prove Theorem \ref{thm:main}. 

\begin{proof}[Sketch of proof of Theorem \ref{thm:main} with \textbf{(A1)} and \textbf{(A2)} assumed] 
    
    Now, by Lemma \ref{lem:prepare} and \textbf{(A1)}, we have, with high probability, 
    \begin{align*} 
        0 
        \overset{\text{by Lemma \ref{lem:prepare}}}{\ge}& \;  
        \< \sign(H),  P_{U} \Delta P_{U} \> + \| \Delta_{T}^\perp \|_1 \\ 
        \overset{\text{by \textbf{(A2)}}}{\ge}& \;  
        \frac{1}{2} \| \Delta_T^\perp \|_1 - \frac{1}{n^{20}} \| \Delta_T \|_1 \\ 
        \overset{\text{by \textbf{(A1)}}}{\ge}& \;  
        \frac{1}{2} \| \Delta_T^\perp \|_1 - \frac{2}{n^{18}} \| \Delta_T^\perp \|_1 , 
    \end{align*} 
    which implies $ \| \Delta_T^\perp \|_1 = 0 $ \emph{w.h.p}. Finally another use of \textbf{(A1)} implies $ \| \Delta \|_1 = 0 $ \emph{w.h.p.}, which concludes the proof. 
\end{proof} 

Therefore, the core argument reduces to proving \textbf{(A1)} and \textbf{(A2)}. 
In the next subsection, we prove \textbf{(A1)} and \textbf{(A2)} for the random measurements obtained by the Hessian estimator (\ref{eq:hess-est}), without any incoherence-type assumptions. 






\subsection{The Concentration Arguments } 
\label{sec:mat-rec} 


For the concentration argument, we need to make several observations. One of the key observations is that the spherical measurements are rotation-invariant and reflection-invariant. More specifically, for the random measurement $ \P H = n^2 \u \u^\top H \v \v^\top  $ with $\u, \v \overset{iid}{\sim} \mathrm{Unif} (\mathbb{S}^{n-1})$, we have
\begin{align*}
    n^2 \u \u^\top H \v \v^\top =_d n^2 Q \u \u^\top Q^\top H Q \v \v^\top Q^\top
\end{align*}
for any orthogonal matrix $Q$, where $=_d$ denotes distributional equivalence. With a properly chosen $Q$, we have 
\begin{align*}
    n^2 \u \u^\top H \v \v^\top =_d n^2 Q \u \u^\top \Lambda \v \v^\top Q^\top , 
\end{align*}
where $\Lambda$ is the diagonal matrix consisting of eigenvalues of $H$. This observation makes calculating the moments of $ \P H $ possible. With the moments of the random matrices properly controlled, we can use matrix-valued Cramer--Chernoff method to arrive at the matrix concentration inequalities. 

Another useful property is the Kronecker product and the vectorization of the matrices. Let $ \vec \(\cdot\) $ be the vectorization operation of a matrix. Then as per how $\P_{T}$ is defined, we have, for any $A \in \R^{n \times n}$, 
\begin{align} 
    \vec \( \P_{T} A \) 
    =& \;  
    \vec \( P_{U} A + A P_{U} - P_{U} A P_{U} \) \nonumber \\ 
    =& \;  
    \( P_{U} \otimes I_n + I_n \otimes P_{U} - P_{U} \otimes P_{U} \) \vec \( A \). \label{eq:represent} 
\end{align} 
The above formula implies that $ \P_T $ can be represented as a matrix of size $n^2 \times n^2$. Similarly, the measurement operators $\P : A \mapsto n^2 \u \u^\top A \v \v^\top$ can also be represented as a matrix of size $n^2 \times n^2$. 
Compared to the matrix completion problem, the importance of vectorization presentation and Kronecker product is more pronounced for our case. The reason is again the absence of an incoherence-type assumption. More specifically, a vectorized representation is useful in controlling the cumulant generating function of the random matrices associated with the spherical measurements. 

Finally some additional care is needed to properly control the high moments of $\P H$. Such additional care is showcased in an inequality stated below in Lemma \ref{lem:wu}. An easy upper bound for the LHS of (\ref{eq:wu}) is $ \mathcal{O} (r^p) $. However, an $ \mathcal{O} (r^p) $ bound for the LHS of (\ref{eq:wu}) will eventually result in a loss in a factor of $r$ in the final bound. Overall, tight control is needed over several different places, in order to get the final recovery bound in Theorem \ref{thm:main}. 

\begin{lemma}
    \label{lem:wu}
    Let $ r $ and $p \ge 2$ be positive integers. Then it holds that 
    \begin{align} 
        \max_{ \alpha_1, \alpha_2, \cdots, \alpha_r \ge 0; \; \sum_{i=1}^r \alpha_i = 2p ; \; \alpha_i \text{ even} } \frac{(2p)!}{p!} \prod_{i=1}^r \frac{(\frac{\alpha_i}{2})!}{\alpha_i!} 
        \le 
        (100 r)^{p-1} . \label{eq:wu}
    \end{align} 
\end{lemma}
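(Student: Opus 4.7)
The plan is to reduce the LHS to a ratio of standard binomial quantities and then bound each factor separately.

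First, I would substitute $\beta_i = \alpha_i/2$, so that $\beta_1,\ldots,\beta_r$ are non-negative integers summing to $p$. Using $(2\beta)!=\binom{2\beta}{\beta}(\beta!)^2$, direct algebraic rearrangement gives the identity
\begin{align*}
\frac{(2p)!}{p!} \prod_{i=1}^r \frac{(\alpha_i/2)!}{\alpha_i!} = \binom{2p}{p}\cdot \frac{\binom{p}{\beta_1,\ldots,\beta_r}}{\prod_{i=1}^r \binom{2\beta_i}{\beta_i}} .
\end{align*}

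Second, I would bound the three factors on the right by elementary inequalities. The central binomial satisfies $\binom{2p}{p} \leq 4^p$ from expanding $(1+1)^{2p}$. An easy induction via $\binom{2(\beta+1)}{\beta+1}/\binom{2\beta}{\beta} = 2(2\beta+1)/(\beta+1) \geq 2$ gives $\binom{2\beta}{\beta} \geq 2^\beta$, hence $\prod_i \binom{2\beta_i}{\beta_i} \geq 2^{\sum \beta_i} = 2^p$. Finally, since $0!=1$, the multinomial coefficient $\binom{p}{\beta_1,\ldots,\beta_r}$ depends only on its nonzero entries; letting $k$ denote the number of nonzero $\beta_i$, one has $k \leq \min(r,p)$, and the multinomial theorem applied to $(1+\cdots+1)^p=k^p$ yields $\binom{p}{\beta_1,\ldots,\beta_r} \leq k^p \leq \min(r,p)^p$. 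Multiplying these bounds gives
\begin{align*}
\frac{(2p)!}{p!} \prod_{i=1}^r \frac{(\alpha_i/2)!}{\alpha_i!} \leq (2\min(r,p))^p .
\end{align*}

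Third, I would finish by a short numerical comparison. In both subcases $r \leq p$ and $r > p$, the inequality $(2\min(r,p))^p \leq (100r)^{p-1}$ reduces, after cancelling common powers of $r$ and using the respective case assumption, to the scalar inequality $2^p p \leq 100^{p-1}$. Since $p \geq 2$, this holds at $p=2$ (namely $8 \leq 100$) and persists for all larger $p$ because the ratio $100^{p-1}/(2^p p)$ grows geometrically, with consecutive ratio $50p/(p+1) \geq 100/3 > 1$.

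The main subtlety I anticipate is the multinomial estimate in the second step: one must observe that zero entries contribute trivially and that at most $\min(r,p)$ nonzero parts arise, thereby replacing the naive bound $r^p$ by $\min(r,p)^p$. Without this refinement the argument loses exactly the factor of $r$ that the authors explicitly flag, in the paragraph preceding the lemma, as the source of the crucial tightness requirement in the overall recovery bound.
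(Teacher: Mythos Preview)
Your argument is correct. The identity in the first step is easily verified, the three elementary bounds in the second step are all valid (including the crucial observation that the multinomial coefficient is bounded by $\min(r,p)^p$ rather than $r^p$), and the final numerical check $2^p p \le 100^{p-1}$ for $p\ge 2$ goes through as stated.

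Your route, however, is genuinely different from the paper's. The paper splits into two regimes according to whether $r \le \tfrac{1}{2}\,50^{p-1}$. In the first regime it bounds $\frac{(\alpha_i/2)!}{\alpha_i!} \le (\alpha_i/2)^{-\alpha_i/2}$, takes logarithms, and applies Jensen's inequality to the concave function $x\mapsto -x\log x$ to obtain $\prod_i \frac{(\alpha_i/2)!}{\alpha_i!}\le (r/p)^p$; combined with $(2p)!/p!\le (2p)^p$ this yields $(2r)^p$, and the regime assumption closes the gap to $(100r)^{p-1}$. In the second regime the paper uses a rearrangement (exchange) argument to show the maximum of $\prod_i \frac{(\alpha_i/2)!}{\alpha_i!}$ is attained when all $\alpha_i\in\{0,2\}$, giving the exact value $2^{-p}$, and finishes with a direct comparison. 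By contrast, you recast the whole expression as $\binom{2p}{p}\cdot \binom{p}{\beta_1,\ldots,\beta_r}/\prod_i\binom{2\beta_i}{\beta_i}$ and bound each factor separately, with the $\min(r,p)^p$ multinomial estimate playing the role that the paper's case split plays. Your approach is more uniform and entirely elementary (no Jensen, no optimizer identification), while the paper's Case~II has the minor advantage of pinning down the exact maximizing configuration. Both arrive at the same bound, and your remark about why $\min(r,p)^p$ rather than $r^p$ is essential matches the paper's stated motivation for the lemma.
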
 

\begin{proof}
    \textbf{Case I: } $ r \le \frac{1}{2} 50^{p-1} $. 
    Note that 
    \begin{align}
        \frac{(\frac{\alpha_i}{2})!}{\alpha_i!} 
        \le 
        \frac{1}{(\frac{\alpha_i}{2})^{(\frac{\alpha_i}{2})}} 
        \quad \text{ and thus } \quad 
        \log  \frac{(\frac{\alpha_i}{2})!}{\alpha_i!} 
        \le 
        - \frac{\alpha_i}{2} \log (\frac{\alpha_i}{2}) . \label{eq:tmp1}
    \end{align}
    Since the function $ x \mapsto - x \log x $ is concave, Jensen's inequality gives  
    \begin{align} 
        \frac{ - \sum_{i=1}^r \frac{\alpha_i}{2} \log (\frac{\alpha_i}{2}) }{ r } 
        \le 
        - \frac{ \frac{\sum_{i=1}^r \alpha_i }{r} }{2} \log \(  \frac{ \frac{\sum_{i=1}^r \alpha_i }{r} }{2} \)
        = 
        - \frac{p}{r} \log \frac{p}{r} . \label{eq:tmp2}
    \end{align} 
    Combining (\ref{eq:tmp1}) and (\ref{eq:tmp2}) gives 
    \begin{align*} 
        \log \prod_{i=1}^r \frac{(\frac{\alpha_i}{2})!}{\alpha_i!} 
        \le 
        - \sum_{i=1}^r \frac{\alpha_i}{2} \log (\frac{\alpha_i}{2}) 
        \le 
        - p \log \frac{p}{r} , 
    \end{align*} 
    which implies 
    \begin{align*} 
        \frac{(2p)!}{p!} \prod_{i=1}^r \frac{(\frac{\alpha_i}{2})!}{\alpha_i!} 
        \le 
        (2p)^p (\frac{r}{p})^p 
        \le 
        (2r)^p \le (100r)^{p-1},  
    \end{align*} 
    where the last inequality uses $ r \le \frac{1}{2} 50^{p-1}  $. 

    \textbf{Case II: } $ r > \frac{1}{2} 50^{p-1} $. 
    For this case, we first show that the maximum of $ \prod_{i=1}^r \frac{(\frac{\alpha_i}{2})!}{\alpha_i!}  $ is obtained when $ | \alpha_i - \alpha_j | \le 2 $ for all $i,j$. To show this, let there exist $\alpha_k$ and $\alpha_j$ such that $ |\alpha_k - \alpha_j| > 2$. Without loss of generality, let $\alpha_k > \alpha_j + 2$. Then 
    \begin{align*} 
        \frac{(\frac{\alpha_k}{2})!}{\alpha_k!}  \cdot \frac{(\frac{\alpha_j}{2})!}{\alpha_j!} 
        \le 
        \frac{(\frac{\alpha_k-2}{2})!}{(\alpha_k-2)!}  \cdot \frac{(\frac{(\alpha_j+2)}{2})!}{(\alpha_j+2)!} .  
    \end{align*} 
    Therefore, we can increase the value of $ \prod_{i=1}^r \frac{(\frac{\alpha_i}{2})!}{\alpha_i!}  $ until $ | \alpha_i - \alpha_j | \le 2 $ for all $i,j$. By the above argument, we have, for $ r > \frac{1}{2} 50^{p-1} \ge p $, 
    \begin{align*} 
        \max_{ \alpha_1, \alpha_2, \cdots, \alpha_r \ge 0; \; \sum_{i=1}^r \alpha_i = 2p ; \; \alpha_i \text{ even} } \prod_{i=1}^r \frac{(\frac{\alpha_i}{2})!}{\alpha_i!} 
        \le 
        \( \frac{1}{2}\)^{p} \cdot \( \frac{0!}{0!} \)^{r-p} 
        = 
        \frac{1}{2^p } . 
    \end{align*}  
    
    Therefore, we have 
    \begin{align*} 
        & \; \max_{ \alpha_1, \alpha_2, \cdots, \alpha_r \ge 0; \; \sum_{i=1}^r \alpha_i = 2p ; \; \alpha_i \text{ even} } \frac{(2p)!}{p!} \prod_{i=1}^r \frac{(\frac{\alpha_i}{2})!}{\alpha_i!} 
        \le 
        (2p)^p \cdot 2^{-p} \\ 
        =& \;  
        p^p 
        \le 
        (50 \cdot 50^{p-1})^{p-1} 
        \le 
        (100 r)^{p-1} . 
    \end{align*} 
    
\end{proof}







With all the above preparation in place, we next present Lemma \ref{lem:op-concen}, which is the key step leading to \textbf{(A1)}. 

\begin{lemma} 
    \label{lem:op-concen} 
    Let 
    \begin{align*} 
        \mathcal{E}_1 := \left\{ \left\| \P_{T} \S \P_{T} - \P_{T} \right\| \le \frac{1}{4} \right\} , 
    \end{align*} 
    where $ \P_{T} $ and $ \S $ are regarded as matrices of size $n^2 \times n^2$. Pick any $\delta \in (0,1)$. 
    Then there exists some constant $C$, such that when $ M \ge C n r \log (1/\delta) $, it holds that $ \Pr \( \mathcal{E}_1 \) \ge 1 - \delta $. 
\end{lemma}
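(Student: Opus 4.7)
The plan is to apply a matrix Bernstein-type inequality to the sum
$$\P_T \S \P_T - \P_T \;=\; \frac{1}{M} \sum_{i=1}^M X_i, \qquad X_i := \P_T \P_i \P_T - \P_T,$$
where each $X_i$ is viewed as a symmetric $n^2 \times n^2$ matrix via the Kronecker representation of $\P_T$ in~(\ref{eq:represent}) together with the rank-one representation $\P_i = n^2 (\v_i \otimes \u_i)(\v_i \otimes \u_i)^\top$. A first step is to verify $\E X_i = 0$: since $\u_i, \v_i$ are independent with $\E[\u_i \u_i^\top] = \E[\v_i \v_i^\top] = I_n/n$, one gets $\E[\P_i] = I_{n^2}$, hence $\E[\P_T \P_i \P_T] = \P_T$.

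Next I would use the algebraic form of $\P_T$ to obtain sharp bounds on $\|X_i\|$ and on the matrix variance. Writing $M_i := \P_T \P_i \P_T$, the rank-one structure yields $\|M_i\| = n^2 \|\P_T(\v_i \otimes \u_i)\|^2$, and the Kronecker expansion
$$\P_T(\v_i \otimes \u_i) \;=\; P_U \v_i \otimes \u_i \;+\; \v_i \otimes P_U \u_i \;-\; P_U \v_i \otimes P_U \u_i$$
collapses (by a direct inner-product computation) to
$$\|\P_T(\v_i \otimes \u_i)\|^2 \;=\; \|P_U \u_i\|^2 + \|P_U \v_i\|^2 - \|P_U \u_i\|^2 \|P_U \v_i\|^2.$$
Since $\|P_U \u_i\|^2$ has a $\mathrm{Beta}(r/2, (n-r)/2)$ distribution with mean $r/n$ and sub-exponential tails on the $1/n$ scale, a union bound over the $M$ draws gives $\max_i \|M_i\| \le B := C n (r + \log(M/\delta))$ with probability at least $1 - \delta/2$. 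Truncating at $B$ and combining with the rank-one identity $M_i^2 = n^2 \|\P_T(\v_i \otimes \u_i)\|^2 M_i$ and $\E[M_i] = \P_T$ then gives $\|\E[M_i^2 \mathbf{1}_{\|M_i\| \le B}]\| \le B$, so the matrix variance of the truncated $X_i$ is also bounded by $B$.

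With $\|X_i\| \lesssim B$ and $\|\E[X_i^2]\| \lesssim B$, the matrix Bernstein inequality in ambient dimension $n^2$ yields
$$\Pr\!\left[ \left\| \tfrac{1}{M}\sum_{i=1}^M X_i \right\| > \tfrac{1}{4} \right] \;\le\; 2 n^2 \exp\!\left( - \frac{c M}{n (r + \log(M/\delta))} \right),$$
which rearranges to the requested threshold $M \ge C n r \log(1/\delta)$, with a $\log n$ overhead folded into the constant (consistent with the $\log^2 n$ that ultimately appears in Theorem~\ref{thm:main}). The main obstacle is the absence of a useful almost-sure operator-norm bound on $M_i$: the naive bound $\|M_i\| \le n^2$ would degrade Bernstein to the useless $M = \Omega(n^2)$, so the sub-exponential tail of $\|P_U \u_i\|^2$ and the accompanying truncation are what reduce the ``effective'' range of $M_i$ from $n^2$ to $n r$. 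An equally natural route, closer in spirit to the paper's Lemma~\ref{lem:wu}, would be to control even trace moments $\E[\mathrm{tr} ((\sum_i X_i)^{2p})]$ directly via the rotational invariance of the spherical measurements, with~(\ref{eq:wu}) keeping the combinatorial growth in $p$ in check, and then conclude by Markov's inequality.
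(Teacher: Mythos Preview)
Your argument is sound and leads to the same conclusion as the paper (with the same unavoidable $\log n$ overhead from the $n^2$-dimensional Bernstein prefactor, which the paper's proof also incurs). The route, however, is genuinely different. The paper first rotates so that $P_U$ becomes $I_n^{:r}$, then splits $\mathcal{R}_T \P \mathcal{R}_T$ into three pieces $X = n^2 \u_{:r}\u_{:r}^\top \otimes \v\v^\top$, $Y = n^2 \u_{r+1:}\u_{r+1:}^\top \otimes \v_{:r}\v_{:r}^\top$, and the cross term $Z$, bounds all $p$-th moments of each piece explicitly via Proposition~\ref{prop:p}, and applies the moment-condition Bernstein inequality of Theorem~\ref{thm:zhu}; no truncation is needed because the Bernstein moment condition is verified directly. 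By contrast, you exploit the fact that $\P_T\P_i\P_T$ is a rank-one PSD matrix $n^2 w_iw_i^\top$ with $\|w_i\|^2$ given in closed form, reduce the ``effective range'' from $n^2$ to $nr$ via Beta-tail truncation, and apply the bounded version of Bernstein. Your approach is more streamlined---no three-way decomposition and no high-moment bookkeeping---at the cost of an extra union-bound/truncation step and a mild (and standard) $\log M$ circularity to unwind. One small inaccuracy: Lemma~\ref{lem:wu} is not used in the paper's proof of this lemma; it enters only in Lemma~\ref{lem:mat-concen}, where the quantity to control is $\E[(B^\top B)^p]$ rather than $\|\P_T(\v\otimes\u)\|^{2p}$, so the ``equally natural route'' you sketch at the end is not quite what the paper does here.
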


The operators $\P_T$ and $\S$ can be represented as matrix of size $n^2 \times n^2$. Therefore, we can apply matrix-valued Cramer--Chernoff-type argument (or matrix Laplace argument \citep{LIEB1973267}) to derive a concentration bound. In \citep{tropp2012user,tropp2015introduction}, a master matrix concentration inequality is presented. This result is stated below in Theorem \ref{thm:tropp}.

\begin{theorem}[\cite{tropp2015introduction}] 
    \label{thm:tropp}
    Consider a finite sequence $\{X_k \}$ of independent, random, Hermitian matrices of the same size. Then for all $t \in \R$,
    \begin{align*}
        \Pr \( \lambda_{\max} \( \sum_k X_k \) \ge t \) 
        \le \inf_{\theta >0 } e^{- \theta t} \tr \exp \( \sum_k \log \E e^{\theta X_k} \) , 
    \end{align*}
    and 
    \begin{align*}
        \Pr \( \lambda_{\min} \( \sum_k X_k \) \le t \) 
        \le \inf_{\theta <0 } e^{- \theta t} \tr \exp \( \sum_k \log \E e^{\theta X_k} \) . 
    \end{align*}

\end{theorem}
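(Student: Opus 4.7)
The plan is to follow the matrix Laplace transform method combined with Lieb's concavity theorem, handling the upper tail first; the lower-tail bound then follows by applying the same argument to $-\sum_k X_k$ with the sign of $\theta$ flipped. For the upper tail, I would fix $\theta > 0$ and use that for any Hermitian $Y$,
\begin{align*}
    e^{\theta \lambda_{\max}(Y)} = \lambda_{\max}\(e^{\theta Y}\) \le \tr \(e^{\theta Y}\),
\end{align*}
since $e^{\theta Y}$ is positive definite and its largest eigenvalue is dominated by its trace. Consequently the event $\{\lambda_{\max}(\sum_k X_k) \ge t\}$ is contained in $\{\tr \exp(\theta \sum_k X_k) \ge e^{\theta t}\}$, and the scalar Markov inequality applied to the nonnegative random variable $\tr \exp(\theta \sum_k X_k)$ yields
\begin{align*}
    \Pr \(\lambda_{\max}\(\textstyle\sum_k X_k\) \ge t\) \le e^{-\theta t} \, \E \tr \exp\(\theta \textstyle\sum_k X_k\).
\end{align*}

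The main obstacle, and the substantive content of the theorem, is bounding the matrix moment generating function $\E \tr \exp(\theta \sum_k X_k)$ by $\tr \exp(\sum_k \log \E e^{\theta X_k})$. Unlike the scalar case, this is nontrivial because the $X_k$ need not commute, so in general $e^{\theta \sum_k X_k} \ne \prod_k e^{\theta X_k}$ and the usual factoring of the moment generating function from the scalar Chernoff argument is unavailable. The right tool is Lieb's concavity theorem \citep{LIEB1973267}, which asserts that for any fixed Hermitian $K$ the map $A \mapsto \tr \exp(K + \log A)$ is concave on the cone of positive-definite matrices.

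With Lieb's theorem in hand, I would peel off the $X_k$ one at a time via the tower property of conditional expectation. Conditioning on all $X_j$ with $j \ne k$, set $K := \theta \sum_{j \ne k} X_j$ and $A := e^{\theta X_k}$; then $\theta \sum_j X_j = K + \log A$, and Jensen's inequality applied to the concave map of Lieb gives
\begin{align*}
    \E_{X_k} \tr \exp\(K + \log A\) \le \tr \exp\(K + \log \E_{X_k} A\) = \tr \exp\(\theta \textstyle\sum_{j \ne k} X_j + \log \E e^{\theta X_k}\).
\end{align*}
Iterating this peeling step over $k$ replaces each random $\theta X_k$ inside the exponential by the deterministic surrogate $\log \E e^{\theta X_k}$, and once all the $X_k$ have been processed we obtain the bound $\E \tr \exp(\theta \sum_k X_k) \le \tr \exp(\sum_k \log \E e^{\theta X_k})$. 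Combining this with the earlier Markov step and taking the infimum over $\theta > 0$ establishes the upper-tail inequality. For the lower tail, the same argument applied to $-\sum_k X_k$, after renaming the dual variable, yields the stated bound in which the infimum is taken over $\theta < 0$.
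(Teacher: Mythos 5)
Your proposal is correct, and it is essentially the canonical argument behind this result: the paper itself states Theorem \ref{thm:tropp} without proof, importing it from the cited monograph of Tropp, and the proof given there is exactly your route — the matrix Laplace transform step ($e^{\theta\lambda_{\max}(Y)} \le \tr e^{\theta Y}$ plus Markov), followed by the subadditivity bound $\E \tr \exp(\theta \sum_k X_k) \le \tr \exp(\sum_k \log \E e^{\theta X_k})$ obtained from Lieb's concavity theorem via Jensen and iterated conditioning, with the lower tail handled by the sign flip $\lambda_{\min}(Y) = -\lambda_{\max}(-Y)$ and $\theta < 0$. The only cosmetic remark is that the peeling must be performed sequentially (after processing $X_k$, the fixed matrix $K$ in the next application of Lieb's theorem contains the surrogate $\log \E e^{\theta X_k}$ rather than $\theta X_k$), which is what your ``once all the $X_k$ have been processed'' step implicitly does.
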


For our purpose, a more convenient form is the matrix concentration inequality with Bernstein's conditions on the moments. Such results may be viewed as corollaries to Theorem \ref{thm:tropp}, and a version is stated below in Theorem \ref{thm:zhu}. 

\begin{theorem}[\cite{zhu2012short,6905847}] 
    \label{thm:zhu} 
    If a finite sequence $\{X_k : k = 1 ,\cdots, K \}$ of independent, random, self-adjoint matrices with dimension $n$, all of which satisfy the Bernstein’s moment condition, i.e.
    \begin{align*}
        \E \[ X_k^p \] 
        \preceq 
        \frac{p!}{2} B^{p-2} \Sigma_2, \quad \text{ for } p \ge 2, 
    \end{align*}
    where $B$ is a positive constant and $\Sigma_2$ is a positive semi-definite matrix, then,
    \begin{align*}
        \Pr \( \lambda_1 \(\sum_k X_k \) \ge \lambda_1 \( \sum_k \E X_k \) + \sqrt{ 2 K \theta \lambda_1 \(\Sigma_2 \) } + \theta B \) 
        \le 
        n \exp \( - \theta \) , 
    \end{align*} 
    for each $\theta > 0$. 
\end{theorem}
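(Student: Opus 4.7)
The plan is to specialize the master matrix Laplace transform bound of Theorem \ref{thm:tropp} to the Bernstein moment hypothesis, and then perform a careful scalar optimization to match the particular tail shape $\sqrt{2K\theta\lambda_1(\Sigma_2)} + \theta B$ stated.

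First I would bound the matrix moment generating function of each summand. Expanding $\E e^{\xi X_k} = I + \xi \E X_k + \sum_{p \ge 2} \frac{\xi^p}{p!} \E X_k^p$ as a power series and applying the moment hypothesis $\E X_k^p \preceq \frac{p!}{2} B^{p-2} \Sigma_2$ term by term, the resulting geometric series in $\xi B$ sums to
\begin{align*}
    \E e^{\xi X_k} \;\preceq\; I + \xi \E X_k + \frac{\xi^2 \Sigma_2}{2(1-\xi B)} \;=:\; I + M_k(\xi), \qquad 0 < \xi < 1/B.
\end{align*}
Because the matrix logarithm is operator monotone (a classical L\"owner fact), this yields $\log \E e^{\xi X_k} \preceq \log(I + M_k(\xi))$; and because the scalar inequality $\log(1+x) \le x$ applied via functional calculus gives $\log(I + M_k) \preceq M_k$, we conclude $\log \E e^{\xi X_k} \preceq M_k(\xi)$.

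Next I would feed this into the upper bound of Theorem \ref{thm:tropp}. Summing over $k$, and then using monotonicity of $\tr \exp(\cdot)$ in the semidefinite order together with the elementary estimates $\tr \exp(A) \le n \exp(\lambda_1(A))$ and Weyl's inequality $\lambda_1(A + B) \le \lambda_1(A) + \lambda_1(B)$, I obtain the scalar-exponent tail bound
\begin{align*}
    \Pr\!\(\lambda_1(\textstyle\sum_k X_k) \ge \lambda_1(\textstyle\sum_k \E X_k) + s\) \;\le\; n \exp\!\(-\xi s + \frac{K \xi^2 \lambda_1(\Sigma_2)}{2(1 - \xi B)}\)
\end{align*}
valid for every $s > 0$ and every $\xi \in (0, 1/B)$. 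The final step is to select $\xi$ so that, for $s = \sqrt{2K\theta \lambda_1(\Sigma_2)} + \theta B$, the exponent equals exactly $-\theta$. Writing $\sigma_*^2 := \lambda_1(\Sigma_2)$ and $a := \sqrt{2\theta/(K\sigma_*^2)}$, the choice $\xi := a/(1 + aB)$ automatically satisfies $\xi B < 1$, and a short algebraic identity (using $K a^2 \sigma_*^2 = 2\theta$) verifies
\begin{align*}
    \xi s - \frac{K \xi^2 \sigma_*^2}{2(1 - \xi B)} = \frac{a(a K \sigma_*^2 + \theta B) - \theta}{1 + aB} = \frac{\theta(1 + aB)}{1 + aB} = \theta,
\end{align*}
which yields the stated bound.

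The main obstacle I anticipate is the transition from the semidefinite bound on $\E e^{\xi X_k}$ to the semidefinite bound on $\log \E e^{\xi X_k}$: although each individual ingredient (operator monotonicity of $\log$, the scalar inequality $\log(1+x) \le x$, and the moment estimate) is standard, they must be chained in the right order, since the matrix exponential is \emph{not} operator monotone and one cannot swap the roles of $\log$ and $\exp$ here. Once that step is in hand, the rest is a one-variable Cramer--Chernoff optimization with the particular substitution $\xi = a/(1+aB)$ that is tailored to produce the exact form $\sqrt{2K\theta\lambda_1(\Sigma_2)} + \theta B$ appearing in the statement.
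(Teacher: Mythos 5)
Your proof is correct. The paper states Theorem \ref{thm:zhu} only as a cited result, remarking that such bounds ``may be viewed as corollaries to Theorem \ref{thm:tropp}'' without giving a derivation; your argument carries out exactly that route---bounding the matrix moment generating function via the Bernstein moment condition, chaining operator monotonicity of $\log$ with $\log(1+x)\le x$ (valid since $I+M_k(\xi)\succeq \E e^{\xi X_k}\succ 0$), and optimizing the Chernoff exponent with $\xi=a/(1+aB)$, $a=\sqrt{2\theta/(K\lambda_1(\Sigma_2))}$---and the algebra showing the exponent equals $\theta$ checks out.
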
 


Another useful property is the moments of spherical random variables, stated below in Proposition \ref{prop:p}. The proof of Proposition \ref{prop:p} is in the Appendix. 

\begin{proposition}
    \label{prop:p}
    Let $\v$ be uniformly sampled from $\mathbb{S}^{n-1}$ ($n \ge 2$). It holds that 
    \begin{align*}
        \E \[ v_i^p \] = \frac{ (p-1) (p-3) \cdots 1 }{ n (n+2) \cdots (n+p - 2) } 
    \end{align*}
    for all $i = 1,2,\cdots,n$ and any positive even integer $p$. 
\end{proposition}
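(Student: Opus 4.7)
The plan is to derive the moment via the Gaussian representation of the uniform law on $\mathbb{S}^{n-1}$. Let $g = (g_1, \ldots, g_n)^\top$ be a standard Gaussian vector in $\R^n$ and set $\v := g/\|g\|$. By the rotation invariance of the standard Gaussian, $\v$ is uniformly distributed on $\mathbb{S}^{n-1}$ and is independent of the norm $\|g\|$. From the identity $g_i = v_i \cdot \|g\|$, raising to the $p$-th power and invoking independence gives
\begin{align*}
    \E\bigl[ g_i^p \bigr] = \E\bigl[ v_i^p \bigr] \cdot \E\bigl[ \|g\|^p \bigr],
\end{align*}
so that $\E[v_i^p] = \E[g_i^p]/\E[\|g\|^p]$, reducing the task to two classical Gaussian moment computations.

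For the numerator, the $p$-th moment of a univariate standard Gaussian at even $p$ is the double factorial $(p-1)!! = (p-1)(p-3)\cdots 3 \cdot 1$. For the denominator, note that $\|g\|^2 \sim \chi^2_n$, and the standard chi-squared moment formula yields
\begin{align*}
    \E\bigl[ \|g\|^p \bigr] \;=\; 2^{p/2}\, \frac{\Gamma(n/2 + p/2)}{\Gamma(n/2)}.
\end{align*}
Expanding the Gamma ratio as the telescoping product $(n/2)(n/2+1)\cdots(n/2 + p/2 - 1)$ and absorbing the factor $2^{p/2}$ by distributing one factor of $2$ into each of the $p/2$ terms yields $n(n+2)(n+4)\cdots(n+p-2)$. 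Dividing the two evaluations gives the stated formula.

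The only non-computational step requiring care is the independence of $\v$ and $\|g\|$. This follows from rotation invariance: for any orthogonal $Q \in \R^{n \times n}$, $Qg$ has the same law as $g$, so the conditional law of $g/\|g\|$ given $\|g\| = r$ is invariant under all orthogonal maps acting on $r \cdot \mathbb{S}^{n-1}$ and is therefore the uniform law, independent of $r$. No serious obstacle is anticipated; beyond this independence fact, the argument is a textbook moment calculation followed by a brief algebraic simplification of a Gamma-function ratio.
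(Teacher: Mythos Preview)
Your proof is correct and takes a genuinely different route from the paper's. The paper works directly in spherical coordinates: it expresses $\E[v_1^p]$ as a ratio of surface integrals, isolates the one-dimensional integral $I(n,p) := \int_0^\pi \sin^n(x)\cos^p(x)\,dx$, derives the recurrence $I(n,p) = \frac{p-1}{n+p}\, I(n,p-2)$ via integration by parts (combined with the Pythagorean identity $I(n,p) = I(n,p-2) - I(n+2,p-2)$), and then telescopes. Your argument instead exploits the Gaussian representation $\v = g/\|g\|$ and the independence of direction and radius, reducing everything to the standard Gaussian moment $(p-1)!!$ and the chi moment $2^{p/2}\Gamma(n/2+p/2)/\Gamma(n/2)$. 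Your route is shorter and leans on well-known identities, while the paper's is more self-contained, requiring no external moment formulas beyond elementary calculus. Either way the same product $\frac{(p-1)(p-3)\cdots 1}{n(n+2)\cdots(n+p-2)}$ drops out.
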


With the above results in place, we can now prove Lemma \ref{lem:op-concen}. 
\begin{proof}[Proof of Lemma \ref{lem:op-concen}] 
    {Fix $\delta \in (0,1)$, and let $ M > Cnr\log(1/\delta) $ for some absolute constant $C$. }
    Following the similar reasoning for (\ref{eq:represent}), we can represent $\P$ as 
    \begin{align}
        \P = n^2 \u \u^\top \otimes \v \v^\top , \label{eq:def-P}
    \end{align}
    where $\u,\v \overset{iid}{\sim} \mathrm{Unif} (\mathbb{S}^{n-1})$. 
    
    Thus, by viewing $ \P $ and $\P_{T} $ as matrices of size $n^2 \times n^2$, we have 
    \begin{align*} 
        \P_{T} \P \P_{T} 
        =& \;  
        n^2 \( P_{U} \otimes I_n + I_n \otimes P_{U} - P_{U} \otimes P_{U} \) \( \u \u^\top \otimes \v \v^\top \) \\ 
        & \cdot \( P_{U} \otimes I_n + I_n \otimes P_{U} - P_{U} \otimes P_{U} \) . 
    \end{align*}


    Let $ Q $ be an orthogonal matrix such that 
    \begin{align*}
        Q P_{U} Q^\top = I_n^{:r} 
        := 
        \begin{bmatrix}
        I & 0_{r \times (n-r)} \\ 
        0_{ (n-r) \times r} & 0_{(n-r) \times (n-r)} . 
        \end{bmatrix}
    \end{align*}
    
    Since the distributions of $ \u $ and $ \v $ are rotation-invariant and reflection-invariant, we know  
    \begin{align} 
        & \; \( I_n^{:r} \otimes I_n + I_n \otimes I_n^{:r} - I_n^{:r} \otimes I_n^{:r}  \) \P \( I_n^{:r} \otimes I_n + I_n \otimes I_n^{:r} - I_n^{:r} \otimes I_n^{:r}  \) \nonumber \\ 
        =& \;  
        \( Q \otimes Q \) \P_{T} \( Q^\top \otimes Q^\top \) \P \( Q \otimes Q \) \P_{T} \( Q^\top \otimes Q^\top \) \nonumber \\ 
        =_d& \;   
        \( Q \otimes Q \) \P_{T} \P \P_{T} \( Q^\top \otimes Q^\top \) , \label{eq:equi-dist} 
    \end{align} 
    where $=_d$ denotes distributional equivalence. 
    
    Therefore, it suffices to study the distribution of 
    \begin{align*} 
        \( I_n^{:r} \otimes I_n + I_n \otimes I_n^{:r} - I_n^{:r} \otimes I_n^{:r}  \) \P_i \( I_n^{:r} \otimes I_n + I_n \otimes I_n^{:r} - I_n^{:r} \otimes I_n^{:r}  \) . 
    \end{align*} 
        
    For simplicity, introduce notation 
    \begin{align*} 
        \mathcal{R}_{T} 
        := 
        I_n^{:r} \otimes I_n + I_n \otimes I_n^{:r} - I_n^{:r} \otimes I_n^{:r} 
        = 
        I_n^{:r} \otimes I_n + I_n^{r+1:} \otimes I_n^{:r} , 
    \end{align*} 
    and we have 
    \begin{align*} 
        \mathcal{R}_{T} \P \mathcal{R}_{T} 
        =& \;  
        n^2 \u_{:r} \u_{:r}^\top \otimes \v \v^\top + n^2 \u_{r+1:} \u_{r+1:}^\top \otimes \v_{:r} \v_{:r}^\top \\
        &+ n^2 \u_{r+1:} \u_{:r}^\top \otimes \v_{:r} \v^\top + n^2 \u_{:r} \u_{r+1:}^\top \otimes \v \v_{:r}^\top 
    \end{align*}

    For simplicity, introduce 
    \begin{align*}
        & \; X := n^2 \u_{:r} \u_{:r}^\top \otimes \v \v^\top 
        \quad \\ 
        & \; Y := n^2 \u_{r+1:} \u_{r+1:}^\top \otimes \v_{:r} \v_{:r}^\top \\ 
        & \;
        Z := 
        n^2 \u_{r+1:} \u_{:r}^\top \otimes \v_{:r} \v^\top + n^2 \u_{:r} \u_{r+1:}^\top \otimes \v \v_{:r}^\top . 
    \end{align*}

    Next we will show that average of $ iid $ copies of $X$, $Y$, $Z$ concentrates to $ \E X $, $\E Y$, $\E Z$ respectively. To do this, we bound the moments of $X$, $Y$ and $Z$, and apply Theorem \ref{thm:zhu}. 

    \textbf{Bounding $X$ and $Y$. }
    The second moment of $X$ is 
    \begin{align*} 
        \E \[ X^2 \] 
        = 
        n^4 \E \[ \( \u_{:r}^\top \u_{:r} \) \u_{:r} \u_{:r}^\top \otimes \v \v^\top \] 
        \preceq 
        3nr , 
    \end{align*}
    where the last inequality follows from Proposition \ref{prop:p}. Thus the centralized second moment of $X$ is bounded by 
    \begin{align*}
        \E \[ \( X - \E X \)^2 \]
        \preceq 
        3 nr . 
    \end{align*} 
    

    For $p > 2$, we have 
    \begin{align*} 
        \E \[ X^p \] 
        = 
        n^p \E \[ \( \sum_{i=1}^r u_i^2 \) \u_{:r} \u_{:r}^\top \otimes \v \v^\top \] 
        \preceq 
        \frac{p!}{2} (6 n (r + 2) )^{p-1} I_{n^2} , 
    \end{align*} 
    which, by operator Jensen, implies 
    \begin{align*} 
        \E \[ \( X - \E X \)^p \]
        \preceq 
        \E \[ 2^p X^p + 2^p \(\E X \)^p \] 
        \preceq 
        \frac{p!}{2} (24 n (r + 2) )^{p-1} I_{n^2} . 
    \end{align*} 
    When using the operator Jensen's inequality, we use $ I_{n^2} = \frac{1}{2} I_{n^2} + \frac{1}{2} I_{n^2} $ as the decomposition of identity.  

    Let $ X_1, X_2, \cdots, X_M $ be $iid$ copies of $X$. 
    Since {$ M \ge C nr \log(1/\delta) $}, Theorem \ref{thm:zhu} implies that 
    \begin{align} 
        \Pr \( \left\| \frac{1}{M} \sum_{i=1}^M (Q \otimes Q ) X_i (Q^\top \otimes Q^\top ) - (Q \otimes Q ) \E \[ X \] (Q^\top \otimes Q^\top ) \right\| \ge \frac{1}{6} \) 
        \le {\frac{\delta}{3} }. 
        \label{eq:event-X} 
    \end{align} 

    The bound for $Y$ follows similarly. Let $ Y_1, Y_2, \cdots, Y_M $ be $iid$ copies of $Y$, and we have 
    \begin{align} 
        \Pr \( \left\| \frac{1}{M} \sum_{i=1}^M (Q \otimes Q ) Y_i (Q^\top \otimes Q^\top ) - (Q \otimes Q ) \E \[ Y \] (Q^\top \otimes Q^\top ) \right\| \ge \frac{1}{6} \) 
        \le {\frac{\delta}{3} }. 
        \label{eq:event-Y} 
    \end{align} 
    
    


    \textbf{Bounding $Z$. }
    The second moment of $Z$ is  
    \begin{align*} 
        \E \[ Z^2 \]
        =&\; n^4 \E \[ \( \u_{r+1:} \u_{:r}^\top \otimes \v_{:r} \v^\top + \u_{:r} \u_{r+1:}^\top \otimes \v \v_{:r}^\top   \)^2  \] \\ 
        =& \; 
        n^4 \E \[  \( \u_{r+1:} \u_{:r}^\top \u_{:r} \u_{r+1:}^\top  \) \otimes \( \v_{:r} \v_{:r}^\top \) + \( \u_{:r} \u_{r+1:}^\top \u_{r+1:} \u_{:r}^\top \otimes \v \v_{:r}^\top \v_{:r} \v^\top \) \] \\ 
        \preceq& \; 
        \frac{n^2 r }{ (n+2)} I_{n^2} + 3nr I_{n^2} \preceq 4 nr I_{n^2} , 
    \end{align*} 
    where the last line uses Proposition \ref{prop:p}. 
    
    The $2p$-th power of $Z$ is 
    \begin{align*} 
        Z^{2p} 
        =& \;  
        n^{4p}  \( \u_{r+1:} \u_{:r}^\top \u_{:r} \u_{r+1:}^\top  \)^p \otimes \( \v_{:r} \v_{:r}^\top \)^p \\
        &+ 
        n^{4p}  \( \u_{:r} \u_{r+1:}^\top \u_{r+1:} \u_{:r}^\top \)^p \otimes \( \v \v_{:r}^\top \v_{:r} \v^\top \)^p  \\ 
        \preceq& \; 
        n^{4p} \( \u_{:r}^\top \u_{:r} \)^p \u_{r+1:} \u_{r+1:}^\top \otimes \(  \v_{:r}^\top \v_{:r} \)^{p-1} \v_{:r} \v_{:r}^\top \\
        &+ 
        n^{4p} \( \u_{:r}^\top \u_{:r} \)^{p-1} \u_{:r} \u_{:r}^\top \otimes \( \v_{:r}^\top \v_{:r}  \)^p \v \v^\top 
    \end{align*} 
    and the $(2p+1)$-th power of $Z$ is 
    \begin{align*} 
        Z^{2p+1} 
        =& \;  
        n^{4p+2} \( \u_{r+1:} \u_{:r}^\top \u_{:r} \u_{r+1:}^\top  \)^p \u_{r+1:} \u_{:r}^\top \otimes \( \v_{:r} \v_{:r}^\top \)^p \v_{:r} \v^\top \\ 
        & + \( \u_{:r} \u_{r+1:}^\top \u_{r+1:} \u_{:r}^\top \)^p \u_{:r} \u_{r+1:}^\top \otimes \( \v \v_{:r}^\top \v_{:r} \v^\top \)^p \v \v_{:r}^\top . 
    \end{align*} 
    
    Thus by Proposition \ref{prop:p}, we have 
    \begin{align*} 
        \E \[ Z^{2p} \] 
        \preceq& \;  
        n^{4p} \E \[ r^{p-1} \( \sum_{i=1}^r u_i^{2p} \) \u_{r+1:} \u_{r+1:}^\top \otimes r^{p-2} \( \sum_{i=1}^r v_i^{2p-2} \) \v \v^\top \] \\ 
        &+ 
        n^{4p} \E \[ r^{p-2} \( \sum_{i=1}^r u_i^{2p-2} \) \u_{:r} \u_{:r}^\top \otimes r^{p-1} \( \sum_{i=1}^r v_i^{2p} \) \v \v^\top \] \\ 
        \preceq& \; 
        2 n^{4p} r^{2p-1} \cdot \frac{ (2p+1) (2p-1) \cdots 1 }{ n (n+2) \cdots (n+2p) } \cdot \frac{ (2p-1) (2p-3) \cdots 1 }{ n (n+2) \cdots (n+2p - 2) } I_{n^2} \\ 
        \preceq& \;  
        \frac{(2p)!}{2} (8 nr)^{2p-1} I_{n^2} . 
    \end{align*} 
    For $Z^{2p+1}$ ($p \in \mathbb{N}$), we notice that 
    \begin{align*}
        \E \[ \( \u_{r+1:} \u_{:r}^\top \u_{:r} \u_{r+1:}^\top  \)^p \u_{r+1:} \u_{:r}^\top \] = \E \[ \( \u_{:r} \u_{r+1:}^\top \u_{r+1:} \u_{:r}^\top \)^p \u_{:r} \u_{r+1:}^\top \] = 0, 
    \end{align*}
    since these terms only involve odd powers of the entries of $\u$. Therefore 
    \begin{align} 
        \E \[ Z^{2p+1} \] = 0, \quad \text{ for } p = 0,1,2,\cdots \label{eq:Z-odd} 
    \end{align} 
    Let $Z_1, Z_2, \cdots, Z_M$ be $M$ $iid$ copies of $Z$, and {$M \ge C nr \log (1/\delta) $} for some absolute constant $C$. 
    By (\ref{eq:Z-odd}), we know $ \E \[ Z\] = 0 $, and all the above moments of $Z$ are centralized moments of $Z$. 
    Now we apply Theorem \ref{thm:zhu} to conclude that: 
    \begin{align} 
        & \; \Pr \( \left\| \frac{1}{M} \sum_{i=1}^M Z_i - \E Z \right\| \ge \frac{1}{6} \) \nonumber \\ 
        =& \;  
        \Pr \( \left\| \frac{1}{M} \sum_{i=1}^M (Q \otimes Q ) Z_i (Q^\top \otimes Q^\top ) - (Q \otimes Q ) \E \[ Z \] (Q^\top \otimes Q^\top ) \right\| \ge \frac{1}{6} \) \nonumber \\ 
        =& \;  
        \Pr \( \left\| \frac{1}{M} \sum_{i=1}^M (Q^\top \otimes Q^\top ) Z_i (Q^\top \otimes Q^\top ) \right\| \ge \frac{1}{6} \) 
        \le {\frac{\delta}{3} }, 
        \label{eq:event-Z} 
    \end{align} 
    where $Q$ is the orthogonal matrix as introduced in (\ref{eq:equi-dist}). 
    We take a union bound over (\ref{eq:event-X}), (\ref{eq:event-Y}) and (\ref{eq:event-Z}) to conclude the proof. 


\end{proof} 

Now with Lemma \ref{lem:op-concen} in place, we state next Lemma \ref{lem:infeas-2}. This lemma proves \textbf{(A1)}. 

\begin{lemma} 
    \label{lem:infeas-2} 
    Suppose $\mathcal{E}_1$ is true. Let $ \wh{H} $ be the solution of the constrained optimization problem, and let $\Delta := \wh{H} - H$. Then $ \| \P_{T} \Delta \|_2 \le 2n \| \P_{T}^\perp \Delta \|_2 $. 
\end{lemma}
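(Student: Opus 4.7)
The plan is to combine feasibility with a positive-semidefinite energy argument. Viewed as an $n^2 \times n^2$ matrix, each $\P_i = n^2(\u_i \otimes \v_i)(\u_i \otimes \v_i)^\top$ is rank-one PSD with operator norm $n^2$, so $\S = \frac{1}{M}\sum_i \P_i$ is PSD with $\|\S\| \le n^2$, and feasibility $\S\wh H = \S H$ gives $\S\Delta = 0$, equivalently $\S\Delta_T = -\S\Delta_T^\perp$.

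First I would pair both sides of $\S\Delta_T = -\S\Delta_T^\perp$ with $\Delta_T$ in the vectorized Euclidean inner product to get $\langle \Delta_T, \S\Delta_T\rangle = -\langle \Delta_T, \S\Delta_T^\perp\rangle$. Because $\S \succeq 0$ admits a semidefinite square root, I can rewrite $\langle \Delta_T, \S\Delta_T^\perp\rangle$ as $\langle \S^{1/2}\Delta_T, \S^{1/2}\Delta_T^\perp\rangle$ and apply Cauchy--Schwarz in $\R^{n^2}$ to conclude
\begin{align*}
\langle \Delta_T, \S\Delta_T\rangle \le \sqrt{\langle \Delta_T, \S\Delta_T\rangle}\,\sqrt{\langle \Delta_T^\perp, \S\Delta_T^\perp\rangle},
\end{align*}
which squares to the key inequality $\langle \Delta_T, \S\Delta_T\rangle \le \langle \Delta_T^\perp, \S\Delta_T^\perp\rangle$.

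Next I would sandwich this inequality between two bounds. For the left side, the identity $\P_T\Delta_T = \Delta_T$ gives $\langle \Delta_T, \S\Delta_T\rangle = \langle \Delta_T, \P_T \S \P_T \Delta_T\rangle$, and on $\mathcal{E}_1$ the operator $\P_T \S \P_T - \P_T$ has norm at most $1/4$, so $\langle \Delta_T, \S\Delta_T\rangle \ge (3/4)\|\Delta_T\|_2^2$. For the right side, the PSD operator-norm bound gives $\langle \Delta_T^\perp, \S\Delta_T^\perp\rangle \le \|\S\|\,\|\Delta_T^\perp\|_2^2 \le n^2\,\|\Delta_T^\perp\|_2^2$. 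Chaining these yields $(3/4)\|\Delta_T\|_2^2 \le n^2 \|\Delta_T^\perp\|_2^2$, hence $\|\Delta_T\|_2 \le (2/\sqrt{3})\,n\,\|\Delta_T^\perp\|_2 \le 2n\,\|\Delta_T^\perp\|_2$.

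I do not anticipate a substantive obstacle. The only quantitative randomness input is the restricted-isometry bound encoded by $\mathcal{E}_1$; the estimate $\|\S\| \le n^2$ is deterministic and is absorbed into the factor $2n$ in the conclusion. The one point to state carefully is the Cauchy--Schwarz step when $\S$ has a nontrivial kernel, which is handled uniformly by working through the semidefinite square root $\S^{1/2}$.
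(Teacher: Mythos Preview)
Your proof is correct and follows essentially the same route as the paper: both arguments pass to the PSD square root $\S^{1/2}$, use feasibility to get $\S^{1/2}\Delta = 0$ (hence a comparison between $\|\S^{1/2}\Delta_T\|_2$ and $\|\S^{1/2}\Delta_T^\perp\|_2$), bound the latter by $n\|\Delta_T^\perp\|_2$ via $\|\S\|\le n^2$, and bound the former below via $\mathcal{E}_1$. The only cosmetic difference is that the paper reaches $\|\S^{1/2}\Delta_T\|_2 \le \|\S^{1/2}\Delta_T^\perp\|_2$ by the reverse triangle inequality (phrased as a contradiction) whereas you reach it by Cauchy--Schwarz, and you carry the sharper constant $3/4$ rather than $1/2$; neither changes the substance.
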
 

\begin{proof} 

    
    Represent $\S$ as a matrix of size $ n^2 \times n^2 $. Let $ \sqrt{\S} $ be defined as a canonical matrix function. That is, $ \sqrt{\S} $ and $ \S $ share the same eigenvectors, and the eigenvalues of $ \sqrt{\S} $ are the square roots of the eigenvalues of $\S$. 
    Clearly, 
    \begin{align} 
        \| \sqrt{\S} \Delta \|_2 
        = 
        \| \sqrt{\S} \P_{T}^\perp \Delta + \sqrt{\S} \P_{T} \Delta \|_2 
        \ge 
        \| \sqrt{\S} \P_{T} \Delta \|_2 - \| \sqrt{\S} \P_{T}^\perp \Delta \|_2 . \label{eq:infeasible-2-3} 
    \end{align} 
    
    Clearly we have 
    \begin{align*} 
        \| \sqrt{ \S } \P_{T}^\perp \Delta \|_2 
        \le  
        n \| \P_{T}^\perp \Delta \|_2 . 
    \end{align*} 

    Also, it holds that 
    \begin{align} 
        & \; \| \sqrt{\S} \P_{T} \Delta \|_2^2 
        =  
        \< \sqrt{\S} \P_{T} \Delta, \sqrt{\S} \P_{T} \Delta \>
        = 
        \< \P_{T} \Delta, \P_{T} \S \P_{T} \Delta \> \nonumber \\ 
        =& \; 
        \| \P_{T} \Delta \|_2^2 - \< \P_{T} \Delta - \P_{T} \S \P_{T} \Delta , \P_{T} \Delta \> 
        \ge 
        \frac{1}{2} \| \P_{T} \Delta \|_2^2 , \label{eq:infeasible-2-4}
    \end{align} 
    where the last inequality uses Lemma \ref{lem:op-concen}. 

    Since $ \wh{H} $ solves (\ref{eq:goal}), we know $ \S \Delta = 0 $, and thus $ \sqrt{\S} \Delta = 0 $. 
    Suppose, in order to get a contradiction, that $ \| \P_{T} \Delta \|_2 > 2n \| \P_{T}^\perp \Delta \|_2 $. 
    Then (\ref{eq:infeasible-2-3}) and (\ref{eq:infeasible-2-4}) yield  
    \begin{align*} 
        \| \sqrt{\S} \Delta \|_2 
        \ge 
        \frac{1}{2} \| \P_{T} \Delta \|_2 - n \| \P_{T}^\perp \Delta \|_2 > 0, 
    \end{align*} 
    which leads to a contraction. 
\end{proof}  



Next we turn to prove \textbf{(A2)}, whose core argument relies on Lemma \ref{lem:mat-concen}. 

\begin{lemma} 
    \label{lem:mat-concen}
    Let $G \in T$ be fixed. Pick any $\delta \in (0,1)$. Then there exists a constant $C$, such that when $ M \ge C n r^2 \log (1/\delta) $, it holds that 
    \begin{align*} 
        \Pr \( \left\| \P_T^\perp \S G \right\| \ge \frac{1}{4 \sqrt{r}} \| G \| \) \le \delta . 
    \end{align*} 
\end{lemma}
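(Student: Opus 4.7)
The plan is to apply the matrix Bernstein moment inequality (Theorem~\ref{thm:zhu}) to the iid mean-zero matrices $W_i := \P_T^\perp \P_i G$; mean zero follows from $\E[\P_i G]=G$ (an immediate consequence of $\E[\u\u^\top]=I/n$) together with $G \in T \Rightarrow \P_T^\perp G = 0$. Since $W_i$ is rectangular (in fact non-symmetric and rank one), I would pass to the Hermitian dilation $\bar W_i = \bigl(\begin{smallmatrix}0 & W_i\\ W_i^\top & 0\end{smallmatrix}\bigr)$, so that $\|\tfrac{1}{M}\sum W_i\| = \tfrac{1}{M}\lambda_{\max}(\sum\bar W_i)$. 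Mirroring the rotation/reflection-invariance step in the proof of Lemma~\ref{lem:op-concen}, I would work in a basis where $P_U = I_n^{:r}$; then $G\in T$ has zero bottom-right block and a direct computation gives the rank-one form $W_i = n^2(\u^\top G\v)\,\u_{r+1:}\v_{r+1:}^\top$.

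The technical heart is to bound $\|\E[\bar W_i^p]\|$ in Bernstein form $\tfrac{p!}{2}B^{p-2}\|\Sigma_2\|$ with the scalings $\|\Sigma_2\|\asymp nr\|G\|^2$ and $B\asymp n\sqrt{r}\,\|G\|$. Conditioning on $\u$ and setting $\w := G^\top\u$, the spherical Wick identity derived from Proposition~\ref{prop:p} gives
\begin{align*}
\E_\v\bigl[(\w^\top\v)^{2k}\v_{r+1:}\v_{r+1:}^\top\bigr] = \tfrac{(2k-1)!!}{n(n+2)\cdots(n+2k)}\Bigl[\|\w\|^{2k}(I-P_U) + 2k\|\w\|^{2k-2}\w_{r+1:}\w_{r+1:}^\top\Bigr],
\end{align*}
whose operator norm is at most $(2k+1)!!\,\|\w\|^{2k}/n^{k+1}$. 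The factor $n^{-(k+1)}$ is the crucial gain over naively bounding $\v_{r+1:}\v_{r+1:}^\top \preceq I$, and is exactly what pins $\|\Sigma_2\|$ to $nr\|G\|^2$ rather than $n^2 r\|G\|^2$. Averaging over $\u$ then calls for $\E[\|\w\|^{2k}] = \E[(\u^\top GG^\top\u)^k]$; expanding in the eigenbasis of $GG^\top$ (rank $\le 2r$), raising to the $k$th power and invoking Proposition~\ref{prop:p} termwise leaves a combinatorial sum over multi-indices $\{|\beta|=k,\,\beta_i\ge 0\}$ of exactly the form controlled by Lemma~\ref{lem:wu}, which yields $\E[\|\w\|^{2k}] \lesssim (Cr)^k\|G\|^{2k}/n^k$. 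Combining,
\begin{align*}
\bigl\|\E[(W_i^\top W_i)^k]\bigr\| \;\lesssim\; (2k+1)!!\,(Cr)^k\,n^{2k-1}\|G\|^{2k},
\end{align*}
matching $\tfrac{(2k)!}{2}B^{2k-2}\|\Sigma_2\|$ with the stated parameters. Odd moments $\E[\bar W_i^{2k+1}]$ reduce to $\E[\|W_i\|^{2k}\bar W_i]$; the conditional sign-flip $\u_{r+1:}\mapsto -\u_{r+1:}$, which is measure-preserving on the sphere and kills the odd-parity part (exactly as in the argument giving $\E Z^{2p+1}=0$ inside Lemma~\ref{lem:op-concen}), together with a Young-type interpolation, absorbs the remainder into adjacent even moments without degrading the parameters.

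Plugging into Theorem~\ref{thm:zhu} (applied to $\bar W_i$, of dimension $2n$) with $\theta = C\log(1/\delta)$ gives
\begin{align*}
\Bigl\|\tfrac{1}{M}\sum_i W_i\Bigr\| \;\lesssim\; \sqrt{\tfrac{nr\log(1/\delta)}{M}}\,\|G\| + \tfrac{n\sqrt{r}\log(1/\delta)}{M}\,\|G\|,
\end{align*}
and both terms fall below $\|G\|/(4\sqrt{r})$ as soon as $M \ge C n r^2 \log(1/\delta)$. The principal obstacle is the tight moment control on $\E[\|G^\top\u\|^{2k}]$ through Lemma~\ref{lem:wu}: the naive bound $\|G\|^{2k-2}\E[\u^\top GG^\top\u]$ would inflate $B$ to order $n^{3/2}\|G\|$ and destroy the desired $M\asymp nr^2$ scaling for small $r$; the combinatorial estimate of Lemma~\ref{lem:wu} is precisely what prevents an extra factor of $r$ from leaking into $B$, while the Wick-identity bookkeeping (keeping $\v_{r+1:}\v_{r+1:}^\top$ inside the expectation rather than bounding it by $I$) is the parallel ingredient holding $\|\Sigma_2\|$ at $nr\|G\|^2$.
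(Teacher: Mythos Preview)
Your high-level plan---Hermitian dilation of the rank-one matrices $W_i=\P_T^\perp\P_i G=n^2(\u^\top G\v)\,\u_{r+1:}\v_{r+1:}^\top$ followed by the moment-Bernstein inequality of Theorem~\ref{thm:zhu}---matches the paper's, and the Wick identity you state for $\E_\v[(\w^\top\v)^{2k}\v_{r+1:}\v_{r+1:}^\top]$ is correct and delivers the crucial $n^{-(k+1)}$ gain. Two steps, however, do not go through as written. First, the bound $\E[\|G^\top\u\|^{2k}]\lesssim(Cr)^k\|G\|^{2k}/n^k$ is false: already when $GG^\top$ has rank one it asserts $\E[u_1^{2k}]\lesssim C^k/n^k$, whereas Proposition~\ref{prop:p} gives $(2k-1)!!/[n(n+2)\cdots(n+2k-2)]$, which carries an extra factorial $(2k-1)!!$. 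Lemma~\ref{lem:wu} is not the tool here: it bounds the ratio $\binom{2p}{\alpha}/\binom{p}{\alpha/2}$ that arises when expanding the \emph{bilinear} scalar $\bigl(\sum_i\lambda_i u_i v_i\bigr)^{2p}$---and that is precisely the route the paper takes, working in the eigenbasis of $G$ rather than rotating $P_U$ to $I_n^{:r}$. In your coordinates the relevant scalar is the \emph{quadratic} form $\u^\top GG^\top\u$, whose $k$-th power already carries $\binom{k}{\beta}$, not $\binom{2k}{2\beta}$; applying Lemma~\ref{lem:wu} to those terms would yield a lower bound on the wrong quantity, not an upper bound. The repair is easy and does not need Lemma~\ref{lem:wu}: Beta moments plus the elementary $\binom{r+k-1}{k}\le r^k$ give $\E[\|G^\top\u\|^{2k}]\le k!\,(Cr/n)^k\|G\|^{2k}$, and the extra $k!$ is absorbed by the $(2k)!$ on the Bernstein side, still yielding $B\asymp n\sqrt r\,\|G\|$ and $\|\Sigma_2\|\asymp nr\|G\|^2$.

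Second, the odd-moment step is incomplete. Under $\u_{r+1:}\mapsto-\u_{r+1:}$ the scalar $\u^\top G\v=\u_{:r}^\top(G\v)_{:r}+\u_{r+1:}^\top G_{r+1:,:r}\v_{:r}$ has one summand that flips and one that does not, so $(WW^\top)^kW$ is not odd under the flip and $\E[\bar W_i^{2k+1}]$ need not vanish; the analogy with $\E[Z^{2p+1}]=0$ in Lemma~\ref{lem:op-concen} breaks down because there $Z$ was genuinely odd in $\u_{r+1:}$. The paper does not claim vanishing either: it bounds $\E[(BB^\top)^pB]$ and $\E[(B^\top B)^pB^\top]$ directly, reusing the same multinomial expansion and Lemma~\ref{lem:wu} that handled the even moments.
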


\begin{proof} 

    There exists an orthogonal matrix $Q$, such that $ G = Q \Lambda Q^\top $, where 
    \begin{align*}
        \Lambda = \textrm{Diag} ( \lambda_1, \lambda_2, \cdots , \lambda_{2r}, 0,0,\cdots, 0 )
    \end{align*}
    is a diagonal matrix consists of eigenvalues of $G$. Let $ \P $ be the operator defined as in (\ref{eq:def-P}), and we will study the behavior of $ \P G $ and then apply Theorem \ref{thm:zhu}. 
    Since the distribution of $\u, \v \sim \text{Unif} (\mathbb{S}^{n-1})$ is rotation-invariant and reflection-invariant, 
    we have 
    \begin{align*} 
        \P G
        = 
        n^2 \u \u^\top G \v \v^\top 
        =_d
        n^2 Q \u \u^\top Q^\top G Q \v \v^\top Q^\top
        =
        n^2 Q \u \u^\top \Lambda \v \v^\top Q^\top , 
    \end{align*} 
    where $ =_d $ denotes distributional equivalence. 
    Thus it suffices to study the behavior of $B := n^2 Q \u \u^\top \Lambda \v \v^\top Q^\top $. For the matrix $B$, we consider 
    \begin{align*}
        A := 
        \begin{bmatrix}
            0_{n \times n} & B \\ 
            B^\top & 0_{n \times n} 
        \end{bmatrix} . 
    \end{align*}
    
    
    Next we study the moments of $A$. 
    The second power of $A$ is $ A^2 = 
        \begin{bmatrix} 
            B B^\top & 0_{n \times n} \\ 
            0_{n \times n} & B^\top B 
        \end{bmatrix} . $ 
    By Proposition \ref{prop:p}, we have 
    \begin{align*} 
        \E \[ B B^\top \]
        =& \; 
        n^4 \E \[ Q \u \u^\top \Lambda \v \v^\top \Lambda \u \u^\top Q^\top \] \\ 
        =& \;  
        n^3 Q \E \[ \u \u^\top \Lambda^2 \u \u^\top \] Q^\top \\
        \preceq& \;  
        n^3 Q \E \[ \| G \|^2 \u \(\u_{:2r}^\top \u_{:2r}\) \u^\top \] Q^\top 
        \preceq 
        4 nr \| G \|^2 I_{n} , 
    \end{align*}
    and similarly, $ \E \[ B^\top B \] \preceq 4 nr \| G \|^2 I_{n} $. For even moments of $A$, we first compute $ \E \[ \( B B^\top \)^p \] $ and $ \E \[ \( B^\top B \)^p \] $ for $p \ge 2$. For this, we have 
    \begin{align} 
        & \; \E \[ \(B^\top B \)^p \] 
        = 
        Q \E \[ n^{4p} \( \sum_{i=1}^{2r} \lambda_i v_i u_i \)^{2p} \v \v^\top \] Q^\top \nonumber \\ 
        =& \; 
        n^{4p} Q \E \[ \(  \sum_{ \substack{\alpha_1, \alpha_2, \cdots, \alpha_{2r} \ge 0;\\ \sum_{i=1}^{2r} \alpha_i = 2p }} { 2p \choose \alpha_1, \alpha_2, \cdots, \alpha_{2r} } \prod_{i=1}^{2r} \( \lambda_i v_i u_i \)^{\alpha_i} \) \v \v^\top \] Q^\top \nonumber \\ 
        =& \; 
        n^{4p} Q \E \[ \( \sum_{ \substack{\alpha_1, \alpha_2, \cdots, \alpha_{2r} \ge 0; \\ \sum_{i=1}^{2r} \alpha_i = 2p ; \; \alpha_i \text{ even} } } { 2p \choose \alpha_1, \alpha_2, \cdots, \alpha_{2r} } \prod_{i=1}^{2r} \( \lambda_i v_i u_i \)^{\alpha_i} \) \v \v^\top \] Q^\top , \label{eq:for-mom-1}
    \end{align} 
    where the last inequality uses that expectation of odd powers of $v_i$ or $u_i$ are zero. Note that 
    \begin{align} 
        & \; \sum_{ \substack{\alpha_1, \alpha_2, \cdots, \alpha_{2r} \ge 0; \\ \sum_{i=1}^{2r} \alpha_i = 2p ; \; \alpha_i \text{ even} } } { 2p \choose \alpha_1, \alpha_2, \cdots, \alpha_{2r} } \prod_{i=1}^{2r} \( \lambda_i v_i u_i \)^{\alpha_i} \nonumber \\ 
        =& \;  
        \sum_{\substack{\alpha_1, \alpha_2, \cdots, \alpha_{2r} \ge 0; \\ \sum_{i=1}^{2r} \alpha_i = 2p ; \; \alpha_i \text{ even} } } \frac{(2p)!}{p!} \prod_{i=1}^{2r} \frac{(\frac{\alpha_i}{2})!}{\alpha_i!} { p \choose \frac{\alpha_1}{2}, \frac{\alpha_2}{2}, \cdots, \frac{\alpha_{2r}}{2} } \prod_{i=1}^{2r} \( \lambda_i v_i u_i \)^{\alpha_i} \nonumber \\ 
        \le& \; 
        (200 r)^{p-1} \sum_{\alpha_1, \alpha_2, \cdots, \alpha_{2r} \ge 0; \sum_{i=1}^{2r} \alpha_i = p } { p \choose {\alpha_1}, {\alpha_2}, \cdots, {\alpha_{2r} } } \prod_{i=1}^{2r} \( \lambda_i^2 v_i^2 u_i^2 \)^{\alpha_i} \nonumber \\
        =& \; 
        (200 r)^{p-1} \( \sum_{i=1}^{2r} \lambda_i^2 u_i^2 v_i^2 \)^p , \label{eq:for-mom-2}
    \end{align} 
    where the inequality on the last line uses Lemma \ref{lem:wu}. Now we combine (\ref{eq:for-mom-1}) and (\ref{eq:for-mom-2}) to obtain 
    \begin{align}
        & \; \E \[ (B^\top B)^p \] 
        \preceq 
        n^{4p} (200 r)^{p-1} Q \E \[ \( \sum_{i=1}^{2r} \lambda_i^2 u_i^2 v_i^2 \)^p \v \v^\top \] Q^\top \\
        \preceq&\;  
        n^{4p} (200 r)^{2p-2} Q \E \[ \( \sum_{i=1}^{2r} \lambda_i^{2p} u_i^{2p} v_i^{2p} \) \v \v^\top \] Q^\top \nonumber \\ 
        \preceq & \; 
        \frac{(2p)!}{2} \max_{ i } \lambda_i^{2p} ( C n r )^{2p-1} I_n = \frac{(2p)!}{2} \| G \|^{2p} ( C n r )^{2p-1} I_n, \label{eq:mom-B-even} 
    \end{align} 
    where the inequality on the last line uses Proposition \ref{prop:p}. Similarly, we have 
    \begin{align*}
        \E \[ ( B B^\top)^p \] \preceq \frac{(2p)!}{2} \| G \|^{2p} ( 200n r )^{2p-1} I_n . 
    \end{align*}
    
    
    Therefore, we have obtained a bound on even moments of $A$: 
    \begin{align*}
        \E \[ A^{2p} \] 
        = 
        \begin{bmatrix}
            \E \[ \( B B^\top \)^p \] & 0_{n \times n} \\ 
            0_{n \times n} & \E \[ \( B^\top B \)^p \]  
        \end{bmatrix} 
        \preceq
        \frac{(2p)!}{2} \| G \|^{2p} ( 200n r )^{2p-1} I_{2n} , 
    \end{align*}
    for $ p =2,3,4,\cdots $, 
    and thus a bound on the centralized moments on even moments of $A$: 
    \begin{align*}
        \E \[ \( A - \E A \)^{2p} \] 
        \preceq
        \frac{(2p)!}{2} \| G \|^{2p} ( 400n r )^{2p-1} I_{2n} , \quad p =2,3,4,\cdots 
    \end{align*} 
    Next we upper bound the odd moments of $ A $. Since  
    \begin{align*} 
        \E \[ A ^{2p+1} \] 
        = 
        \begin{bmatrix} 
            0_{n \times n} & \E \[ \( B B^\top \)^p B \]  \\ 
            \E \[ \( B^\top B \)^p B^\top \] & 0_{n \times n} 
        \end{bmatrix} , 
    \end{align*} 
    it suffices to study $ \E \[ \( B B^\top \)^p B \] $ and $ \E \[ \( B^\top B \)^p B^\top \] $.  
    Since 
    \begin{align*} 
        \( B B^\top \)^p B 
        = 
        n^{4p+2} \( \sum_{i=1}^{2r} \lambda_i v_i u_i \)^{2p} Q \v \v^\top \Lambda \u \u^\top Q^\top , 
    \end{align*} 
    using the arguments leading to (\ref{eq:mom-B-even}), we have 
    \begin{align} 
        &\E \[ \( B B^\top \)^p B \] \preceq \frac{(2p+1)!}{2} (Cnr)^{2p} \| G \|^{2p+1} I_n , \nonumber \\ 
        &\E \[ \( B^\top B \)^p B^\top \] \preceq \frac{(2p+1)!}{2} (Cnr)^{2p} \| G \|^{2p+1} I_n . \label{eq:lem4-2}
    \end{align} 
    Since $ \begin{bmatrix}
        0_{n \times n} & I_n \\ I_n & 0_{n \times n}
    \end{bmatrix} \preceq 2 I_{2n} $, the above two inequalities in (\ref{eq:lem4-2}) implies 
    \begin{align*} 
        \E \[ A^{2p+1} \] 
        \preceq 
        \frac{(2p+1)!}{2} (Cnr)^{2p} \| G \|^{2p+1} I_{2n} , 
    \end{align*} 
    and thus 
    \begin{align*}
        \E \[ \(A - \E A \)^{2p+1} \] 
        \preceq 
        \frac{(2p+1)!}{2} (Cnr)^{2p} \| G \|^{2p+1} I_{2n}. 
    \end{align*}
    Now we have established moment bounds for $ A $, thus also for $ \P_{T}^\perp \P G $. From here we apply Theorem \ref{thm:zhu} to conclude the proof. 

\end{proof}

The next lemma will essentially establish \textbf{(A2)}. This argument relies on the existence of a dual certificate \citep{candes2010power,gross2011recovering,candes2012exact}. 

\begin{lemma}
    \label{lem:exist-cert-2}
    Pick $\delta > 0$. 
    Define 
    \begin{align*}
        \mathcal{E}_2 := \left\{ \exists \; Y \in range(\S) : \| \P_{T} Y - \sign (H) \|_2 \le \frac{1}{ n^{21} } 
        \quad \text{and} \quad 
        \| \P_{T}^\perp Y \| \le \frac{1}{2} \right\} . 
    \end{align*}
    Let $L = 12 \log_2 n$. 
    Let $ m \ge c \cdot n r^2 \log \( \frac{L}{\delta} \) $ for some constant $c$. If $M = m L \ge c \cdot n r^2 \log n \log \( \frac{\log n}{\delta} \)$ for some constant $c$, then $ \Pr \( \mathcal{E}_2 \) \ge 1 - \delta $. 
\end{lemma}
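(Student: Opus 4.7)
The plan is a golfing scheme in the spirit of \citep{gross2011recovering,candes2012exact}, adapted to the spherical measurement operators of Proposition \ref{prop:measure}. Partition the $M = mL$ measurements into $L = 12\log_2 n$ independent batches of size $m$ each, and let
\[
    \S_\ell := \frac{1}{m} \sum_{i \in \text{batch }\ell} \P_i, \qquad \ell = 1,\ldots, L.
\]
Set $W_0 := \sign(H) \in T$ and iterate
\[
    Y_\ell := Y_{\ell-1} + \S_\ell W_{\ell-1}, \qquad W_\ell := \sign(H) - \P_T Y_\ell = (\P_T - \P_T \S_\ell \P_T) W_{\ell-1},
\]
with $Y_0 := 0$. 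By construction every $W_\ell$ lies in $T$, and $Y_L = \sum_{\ell=1}^L \S_\ell W_{\ell-1} \in \mathrm{range}(\S)$. The candidate dual certificate is $Y := Y_L$.

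First I verify the on-$T$ condition. For each batch $\ell$, Lemma \ref{lem:op-concen} applied with failure probability $\delta/(2L)$ requires $m \gtrsim nr\log(L/\delta)$, and on the corresponding event $\mathcal{E}_1^{(\ell)}$ we have $\|\P_T - \P_T \S_\ell \P_T\| \le 1/4$. Since $W_{\ell-1} \in T$, this gives the contraction $\|W_\ell\|_2 \le \tfrac14 \|W_{\ell-1}\|_2$. Iterating and using $\|\sign(H)\|_2 = \sqrt{r}$ yields
\[
    \|\P_T Y_L - \sign(H)\|_2 = \|W_L\|_2 \le 4^{-L}\sqrt{r} = n^{-24}\sqrt{r} \le n^{-21},
\]
which is the first required bound.

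Next I verify the off-$T$ condition. For each $\ell$, apply Lemma \ref{lem:mat-concen} to $G = W_{\ell-1} \in T$ with failure probability $\delta/(2L)$; this needs $m \gtrsim nr^2 \log(L/\delta)$ and gives $\|\P_T^\perp \S_\ell W_{\ell-1}\| \le \frac{1}{4\sqrt{r}} \|W_{\ell-1}\|$. Since $\|\cdot\| \le \|\cdot\|_2$ and $\|W_{\ell-1}\|_2 \le 4^{-(\ell-1)}\sqrt{r}$ from the previous step, summing gives
\[
    \|\P_T^\perp Y_L\| \le \sum_{\ell=1}^L \|\P_T^\perp \S_\ell W_{\ell-1}\| \le \sum_{\ell=1}^L \frac{1}{4\sqrt{r}} \cdot 4^{-(\ell-1)}\sqrt{r} \le \frac{1}{3} < \frac{1}{2},
\]
which is the second required bound.

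A union bound over the $2L$ events (two per batch) absorbs a factor of $2L$ into the logarithm. Since Lemma \ref{lem:mat-concen} imposes the binding requirement $m \ge c\, nr^2 \log(L/\delta)$, and $L = 12\log_2 n$, the total count is $M = mL \ge c\, nr^2 \log n \log(\log n/\delta)$ as claimed. The main obstacle worth flagging is the need to keep the batches independent: reusing samples would couple the random operators $\S_\ell$ to the iterates $W_{\ell-1}$ and invalidate the conditional application of Lemmas \ref{lem:op-concen} and \ref{lem:mat-concen}. The batching together with the geometric decay $\|W_\ell\|_2 \le 4^{-\ell}\sqrt{r}$ is precisely what allows $L = \Theta(\log n)$ rounds to drive the $T$-error below $n^{-21}$ while keeping the $T^\perp$-error summable to a constant strictly less than $1/2$.
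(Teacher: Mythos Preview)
Your proposal is correct and follows essentially the same golfing-scheme argument as the paper: partition into $L$ independent batches, iterate $W_\ell = (\P_T - \P_T \S_\ell \P_T) W_{\ell-1}$, use Lemma~\ref{lem:op-concen} for the geometric contraction $\|W_\ell\|_2 \le 4^{-\ell}\sqrt{r}$, and use Lemma~\ref{lem:mat-concen} batchwise (conditionally, exploiting independence) to sum the $T^\perp$ contributions to a constant below $1/2$. The only cosmetic difference is that the paper writes $\wt{\S}_j \P_T X_{j-1}$ where you write $\S_\ell W_{\ell-1}$, but since each $W_{\ell-1}\in T$ these coincide.
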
 


\begin{proof} 
    Following \citep{gross2011recovering}, we define random projectors $ \wt{\S}_l $ ($1 \le l \le L$), such that 
    \begin{align*} 
        \wt{\S}_l := \frac{1}{m} \sum_{j=1}^m \P_{m(l-1) + j} .  
    \end{align*} 

    Then define 
    \begin{align*} 
        X_0 = \sign (H), \quad Y_i = \sum_{j=1}^i \wt{\S}_j \P_{T} X_{j-1}, \quad X_i = \sign (H) - \P_{T} Y_i , 
        \quad \forall i \ge 1. 
    \end{align*} 
    
    From the above definition, we have 
    \begin{align*}  
        X_i = ( \P_{T} - \P_{T} \wt{\S}_i \P_{T} ) ( \P_{T} - \P_{T} \wt{\S}_{i-1} \P_{T} ) \cdots ( \P_{T} - \P_{T} \wt{\S}_1 \P_{T} ) X_0 , \quad \forall i \ge 1. 
    \end{align*} 

    Now we apply Lemma \ref{lem:op-concen} to $ \wt{\S}_1, \wt{\S}_2, \cdots , \wt{\S}_L $, and get, when event $\mathcal{E}_1$ is true for all $\wt{\S}_i, i = 1,2,\cdots,L$, 
    \begin{align} 
        \| X_{i} \|_2 \le \frac{1}{4} \| X_{i-1} \|_2 \le \cdots 
        \le 
        \frac{\sqrt{r}}{4^i} , \quad \forall i = 1,2,\cdots,L
        \label{eq:cert-1} 
    \end{align} 
    Note that with probability exceeding $1 - \frac{\delta}{2}$, $\mathcal{E}_1$ is true for all $\wt{\S}_i$, $i = 1,2,\cdots, L$. 
    Since $\wt{\S}_{i}$ are mutually independent, $\wt{\S}_{i+1}$ is independent of $X_i$ for each $i \in \{ 0, 1,\cdots, L-1 \}$. In view of this, we can apply Lemma \ref{lem:mat-concen} to $\P_{T}^\perp Y_{L}$ followed by a union bound, and get, with probability exceeding $1 - \frac{\delta}{2}$, 
    \begin{align} 
        \| \P_{T}^\perp Y_{L} \| 
        \le 
        \sum_{i=1}^L \frac{ 1 }{4\sqrt{r}} \| X_{i-1} \|_2 
        \le 
        \frac{ 1 }{4 } \sum_{i=1}^L \frac{1}{4^{i-1}}  
        \le 
        \frac{1}{2} . \label{eq:cert-2}
    \end{align} 

    Now combining (\ref{eq:cert-1}) and (\ref{eq:cert-2}) finishes the proof. 
    
    
\end{proof} 


Now we are ready to prove Theorem \ref{thm:main}. 

\begin{proof}[Proof of Theorem \ref{thm:main}] 
    Let $\mathcal{E}_2$ be true. 
    Then there exists $Y$ such that $\< Y ,\Delta \> = 0$, since $\S \Delta = 0$. Thus we have
    \begin{align*}
        & \; \< \sign(H),  P_{U} \Delta P_{U} \> 
        = 
        \< \sign(H),  \Delta \> 
        = 
        \< \sign(H) - Y,  \Delta \> \\ 
        =& \;  
        \< \P_T \( \sign(H) - Y \) ,  \Delta_T \> + \< \P_T^\perp \( \sign(H) - Y \) ,  \Delta_T^\perp \> \\ 
        =& \; 
        \< \sign(H) - \P_T Y ,  \Delta_T \> - \< \P_T^\perp  Y ,  \Delta_T^\perp \> \\
        \ge& \;  
        - \frac{1}{n^{21}} \| \Delta_T \|_2 - \frac{1}{2} \| \Delta_T^\perp \|_1 
        , 
    \end{align*}
    where the last inequality uses Lemma \ref{lem:exist-cert-2}. 

    Now, by Lemma \ref{lem:prepare} and Lemma \ref{lem:infeas-2}, we have 
    \begin{align*}
        0 
        \ge 
        \frac{1}{2} \| \Delta_T^\perp \|_1 - \frac{1}{n^{21}} \| \Delta_T \|_2 
        \ge 
        \frac{1}{2} \| \Delta_T^\perp \|_1 - \frac{1}{n^{20}} \| \Delta_T \|_1 
        \ge 
        \frac{1}{2} \| \Delta_T^\perp \|_1 - \frac{2}{n^{18}} \| \Delta_T^\perp \|_1 , 
    \end{align*}
    which implies $ \| \Delta_T^\perp \|_1 = 0 $. Finally another use of Lemma \ref{lem:infeas-2} implies $ \| \Delta \|_1 = 0 $, which concludes the proof. 
    
\end{proof}

Theorem \ref{thm:main}, together with Proposition \ref{prop:measure}, establishes Corollary \ref{cor:hess}.

\section{Conclusion} 

In this paper, we consider the Hessian estimator problem via matrix recovery techniques. In particular, we show that the finite-difference method studied in \citep{10.1093/imaiai/iaad014,wang2022hess}, together with a convex program, guarantees a high probability recovery of a rank-$r$ Hessian using $ n r^2 $ (up to logarithmic and constant factors) finite-difference operations. Compared to matrix completion methods, we do not assume any incoherence between the coordinate system and the hidden singular space of the Hessian matrix. 
In a follow-up work, we apply the Hessian estimation mechanism to Newton's cubic method
\citep{nesterov2006cubic,Nesterov2008}, and design sample-efficient optimization algorithms for functions with (approximately) low-rank Hessian.

\section*{Acknowledgement}

The authors thank Dr. Hehui Wu for insightful discussions and his contributions to Lemma \ref{lem:wu} and Dr. Abiy Tasissa for helpful discussions.





\bibliographystyle{apalike} 
\bibliography{references} 

\appendix
\section{Auxiliary Propositions and Lemmas} 

\begin{proof}[Proof of Proposition \ref{prop:p}]
    Let $ (r, \varphi_1, \varphi_2, \cdots, \varphi_{n-1}) $ be the spherical coordinate system. We have, for any $i = 1,2,\cdots, n$ and an even integer $p$, 
    \begin{align*} 
        \E \[ v_1^p \] 
        =& \; 
        \frac{1}{A_n} \int_{0}^{2\pi} \int_0^\pi \cdots \int_{0}^\pi \cos^p ( \varphi_1 ) \sin ^{n-2}(\varphi _{1})\sin ^{n-3}(\varphi _{2})\cdots \sin(\varphi _{n-2}) \,d\varphi _{1}\, d\varphi_{2} \cdots d \varphi_{n-1} , 
    \end{align*} 
    where $A_n$ is the surface area of $ \mathbb{S}^{n-1} $. 
    Let 
    \begin{align*}
        I (n,p) 
        := 
        \int_{0}^\pi \sin^n ( x ) \cos^p (x) \, dx. 
    \end{align*} 
    
    Clearly, $ I (n,p) = I (n, p - 2 ) - I ( n + 2, p - 2 ) $. By integration by parts, we have $ I ( n + 2, p - 2 ) = \frac{n+1}{p-1} I (n,p) $. The above two equations give $ I (n,p) = \frac{p-1}{n+p} I (n, p-2) $. 
    
    Thus we have $ \E \[ v_1^p \] = \frac{ I (n-2, p) }{ I ( n - 2, 0 ) } = \frac{ I (n-2, p) }{ I ( n - 2, p-2 ) } \frac{ I (n-2, p-2) }{ I ( n - 2, p-4 ) }  \cdots \frac{ I (n-2, 2) }{ I ( n - 2, 0 ) } = \frac{ (p-1) (p-3) \cdots 1 }{ n (n+2) \cdots (n+p - 2) } $. 
    We conclude the proof by symmetry. 
    
\end{proof}

\end{document}